\numberwithin{equation}{section}
\theoremstyle{plain} 
\newtheorem{theorem}{Theorem}[section]
\newtheorem{lemma}[theorem]{Lemma}
\newtheorem{definition}[theorem]{Definition}
\newtheorem{remark}[theorem]{Remark}
\newtheorem{example}[theorem]{Example}
\newcommand{\N}{\mathbb N}
\newcommand{\EE}{\mathbb E}
\newcommand{\PP}{\mathbb{P}}
\newcommand{\Tt}{\mathcal{T}}
\newcommand{\tX}{\tilde{X}}
\newcommand{\tk}{\tilde{k}}
\newcommand{\tl}{\tilde{l}}
\newcommand{\tp}{\tilde{p}}
\newcommand{\tchi}{{\tilde{\chi}}}
\newcommand{\tf}{{\tilde{f}}}
\newcommand{\ind}[1]{\mathbbm{1}_{\left\{#1\right\}}}
\renewcommand{\title}[1]{\gdef\@title{\Huge #1}}
\begin{document}
	
	\title{The friendship paradox for trees}
	
	\author{\renewcommand{\thefootnote}{\arabic{footnote}}
		Rajat Subhra Hazra
		\footnotemark[1]
		\\
		\renewcommand{\thefootnote}{\arabic{footnote}}
		Frank den Hollander
		\footnotemark[1]
		\\
		\renewcommand{\thefootnote}{\arabic{footnote}}
		Nelly Litvak
		\footnotemark[2]
		\\
		\renewcommand{\thefootnote}{\arabic{footnote}}
		Azadeh Parvaneh
		\footnotemark[1]
	}
	
	\footnotetext[1]{
		Mathematical Institute, Leiden University, Einsteinweg 55, 2333 CC Leiden, The Netherlands.\\
		\emph{email}: \{r.s.hazra;\,denholla;\,s.a.parvaneh.ziabari\}@math.leidenuniv.nl
	}
	
	\footnotetext[2]{
		Department of Mathematics and Computer Science, Eindhoven University of Technology, 
		P.O.\ Box 513, 5600 MB Eindhoven, The Netherlands.\\
		\emph{email}: n.v.litvak@tue.nl
	}
	
	\maketitle
\begin{abstract}
We analyse the friendship paradox on finite and infinite trees. In particular, we monitor the vertices for which the friendship-bias is positive, neutral and negative, respectively. For an arbitrary finite tree, we show that the number of positive vertices is at least as large as the number of negative vertices, a property we refer to as significance, and derive a lower bound in terms of the branching points in the tree. For an infinite Galton-Watson tree, we compute the densities of the positive and the negative vertices and show that either may dominate the other, depending on the offspring distribution. We also compute the densities of the edges having two given types of vertices at their ends, and give conditions in terms of the offspring distribution under which these types are negatively correlated.
	
	\medskip\noindent
	{\it AMS} 2020 {\it subject classifications.}
	05C80, 
	60C05. 

	\medskip\noindent
	{\it Key words and phrases.} Trees, Friendship Paradox, Friendship-bias, Positive, neutral and negative vertices, Significance.
	
	\medskip\noindent
	{\it Acknowledgment.} The work in this paper was supported by the Netherlands Organisation for Scientific Research (NWO) through Gravitation-grant NETWORKS-024.002.003. FdH and NL were supported by the National Science Foundation (NSF) under Grant No.\ DMS-1928930 while in residence at the Simons Laufer Mathematical Sciences Institute in Berkeley, California, USA during the Spring 2025 semester. AP received funding from the European Union's Horizon 2020 research and innovation programme under the Marie Sk\l odowska-Curie grant agreement Grant Agreement No 101034253. The authors thank Joshua Wolf (TU Munich) for pointing out two errors in an earlier version of the paper. 
	
\end{abstract}

\vspace{0.1cm}
\hfill\includegraphics[scale=0.1]{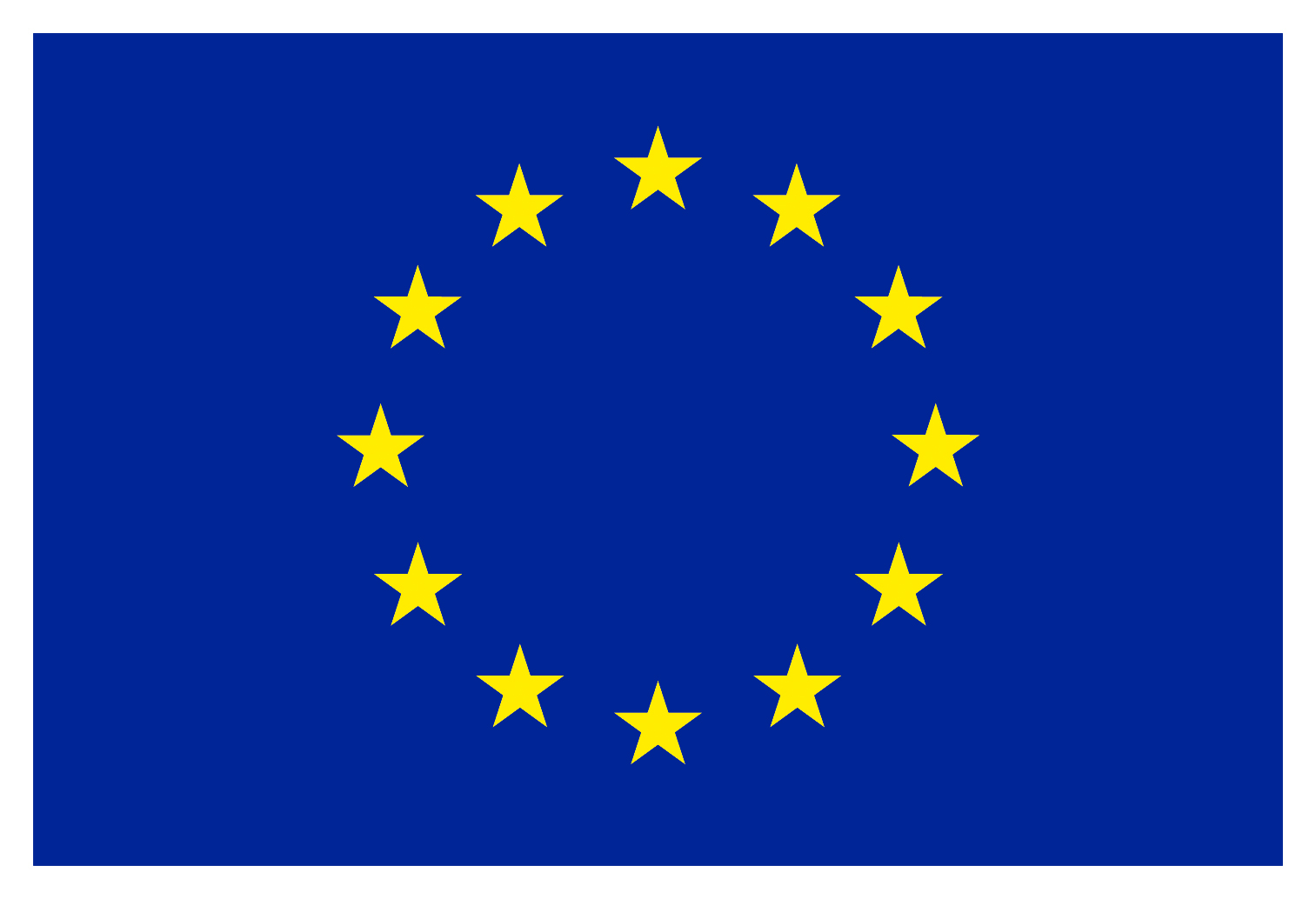}


\newpage 

\small
\tableofcontents
\normalsize


\section{Introduction and main results}

Section~\ref{sec:B} provides a brief background. Section~\ref{sec:AP-not} introduces notation. Section~\ref{sec:FP-finite-tree} states our main theorems for finite trees, Section~\ref{sec:FP-infinite-tree} for infinite trees. Section~\ref{sec:DiscPro} places these theorems in their proper context and lists a few open problems. Section~\ref{sec:outline} gives the outline of the remainder of the paper.


\subsection{Background}
\label{sec:B}

The friendship paradox says that for any finite graph the \emph{average friendship-bias} is non-negative \cite{SF,HHP1}. More precisely, let $G_n$ be a \emph{finite undirected graph} with $n$ vertices, labelled by $u \in [n] = \{1,\ldots,n\}$. Define the friendship-bias of vertex $u$ as
\[
\Delta_{u,n} = \left(\frac{1}{d_u} \sum_{v \in [n]} A_{uv} d_v - d_u\right) \,\mathbbm{1}_{\{d_u \neq 0\}}, 
\]
where $A=(A_{uv})_{u,v\in [n]}$ is the adjacency matrix of $G_n$, i.e., $A_{uv}$ is the number of edges between $u \neq v$ while $A_{uu}$ is twice the number of self-loops at $u$, and $d_u = \sum_{v \in [n]} A_{uv}$ is the degree of $u$. The index $n$ is used to emphasise that the friendship-bias is computed in the graph $G_n$ with $n$ vertices. Then the friendship paradox says that
\[
\frac{1}{n} \sum_{u \in [n]} \Delta_{u,n} \geq 0,
\] 
with equality if and only if all connected components of $G_n$ are regular. Typically, there are $u$ for which $\Delta_{u,n}$ is $>0$, $=0$, $<0$. Even though the average is non-negative, there are graphs for which the number of vertices $u$ with $\Delta_{u,n} < 0$ is larger than the number of vertices $u$ with $\Delta_{u,n} > 0$. One of the goals of the present paper is to analyse the number and the arrangement of such vertices in \emph{finite trees} and \emph{infinite trees}.

The friendship paradox was introduced in 1991 by the American sociologist Scott Feld~\cite{SF}. Since then, numerous studies have explored and applied this paradox across various domains, including social sciences, information theory and epidemiology. For instance, researchers have used the friendship paradox to design efficient sampling methods for early detection of contagious outbreaks~\cite{CF}, and to improve strategies for delivering information and monitoring its propagation~\cite{KN}, as well as for reducing the spread of contagion~\cite{rosenblatt}. The friendship paradox has applications in exploring the network, for example, quickly finding vertices of high degrees \cite{avrachenkov2014quick} and developing immunisation strategies~\cite{cohen2003immunization, britton2007vaccination}. 

Apart from being interesting in itself, the friendship paradox has useful generalisations and implications. For instance, the friendship-bias can be viewed as a centrality measure, akin to PageRank centrality \cite{PageRank} and degree centrality. Co-authorship networks of Physical Review journals and Google Scholar profiles reveal that on average the co-authors of a person have more collaborations, publications and citations than that person has themselves \cite{EomJo}. On Twitter, most users, on average, follow others who are more popular and more active, i.e., tend to have more followers and share more viral content than they do themselves \cite{HKK}. The friendship paradox has implications for individual biases in perception and thought contagion as well, because our social norms are influenced by our perceptions of others, and are strongly shaped by the people around us \cite{Jackson, Lerman}. 

Despite many practical examples, \emph{mathematically} the friendship paradox has received only modest attention, and a proper \emph{quantification} of friendship-biases in large-size networks is still lacking. In \cite{CR}, it is shown that a randomly chosen friend of a randomly chosen individual has stochastically more friends than that individual. In \cite{HHP1}, a quantification of the friendship paradox for sparse locally tree-like random graphs is carried out, and the notion of \emph{significance} of the friendship paradox is introduced. Specifically, the friendship paradox is said to be asymptotically significant for a sequence of finite random graphs $(G_n)_{n\in\N}$ labelled by the number of vertices $n$ if the proportion of vertices with non-negative friendship-bias converges in probability as $n\to\infty$ to a number $\geq \tfrac{1}{2}$. Interestingly, it is shown that the friendship paradox is significant for the homogeneous Erd\H{o}s-R\'enyi random graph, the inhomogeneous Erd\H{o}s-R\'enyi random graph, as well as for a well-known case of the configuration model and the preferential attachment model. 

In \cite{HHP2}, the \emph{multi-level} friendship paradox is analysed, which addresses the friendship paradox at higher levels of friendship. More specifically, by extending the notion of friendship-bias to a multi-level setting, using random walks on graphs and imposing additional constraints on the graph, it is shown that the $k$-level average friendship-bias is non-negative for each fixed $k$, regardless of whether $k$-level friends are explored via backtracking or non-backtracking random walks.
 
Recent works have considered generalisations of the friendship paradox to wedges and triangles \cite{BGHJS}, and to various notions of network centrality \cite{HV}. Closer to our work, \cite{Lee2026} empirically demonstrates that in a small artificial network, as well as in the well-known Zachary's Karate Club network, the fraction of vertices $u$ with $\Delta_{u,n}>0$ is smaller than $\tfrac12$. Moreover, \cite{Lerman2025,Lee2026} consider yet another form of friendship paradox, namely, when the degree of a vertex is smaller than the median degree of its neighbours.  While the fraction of vertices that experience this form of friendship paradox is larger than $\tfrac12$ in empirical networks in \cite{Lerman2025,Lee2026}, it can be smaller than $\tfrac12$ in general, as demonstrated by a toy example in \cite{Lee2026}.

In the present paper we develop a probabilistic framework for understanding and analysing the friendship paradox for trees. We isolate and quantify the local structure underlying the friendship paradox on trees, which has both deterministic and random features. By formulating the paradox in terms of vertex-level bias and type, we derive sharp results on the distribution and correlation of these types in tree environments. Our results yield a systematic framework for studying when and how the friendship paradox manifests itself in tree-like structures that arise naturally in real-world networks.


\subsection{Notations}
\label{sec:AP-not}

Let $(\mathcal{T}_n,\phi)$ be a \emph{rooted tree} with $n \in \mathbb{N} \cup \{\infty\}$ vertices, vertex set $V(\mathcal{T}_n)$, and root vertex $\phi$. Throughout the paper, all random trees are defined on a common probability space $(\Omega,\mathcal{F},\PP)$, and expectation and variance with respect to $\PP$ are denoted by $\EE$ and $\mathbb{V}{\rm ar}$, respectively. To label the vertices, use the Ulam-Harris method of labelling a tree: each vertex $u \in V(\mathcal{T}_n) \setminus \{\phi\}$ is labelled by a finite word $\phi\, u^1 \ldots u^k$, where $k \in \mathbb{N}$ and, for $u \in V(\mathcal{T}_n)$, the $j$th offspring of $u$ is
\[
u_{j} = uj = \left\{
\begin{array}{ll}
\phi\, j, &\text{if $u=\phi$},\\[0.2cm]
\phi\, u^{1}\ldots u^{k} j, &\text{if $u=\phi\, u^{1}\ldots u^{k}$}.
\end{array} 
\right. 
\]

For $u, v \in V(\mathcal{T}_n)$, write $u\sim v$ to indicate that vertex $u$ is adjacent to vertex $v$. The edge set of $\mathcal{T}_n$ is $E(\mathcal{T}_n) = \{\{u,v\}\colon\, u,v\in V(\mathcal{T}_n), u \sim v\}$. Let $d_u$ be the degree of a vertex $u$. Define the \textit{friendship-bias} of $u$ as
\[
\Delta_{u,n} = \frac{1}{d_u} \sum_{ {v \in V(\mathcal{T}_n)} \atop {v \sim u} } d_v - d_u,
\]
which represents the difference between the average degree of the neighbours of $u$ and the degree of $u$ itself, with the index $n$ referring to the friendship-bias being computed in the tree $\mathcal{T}_n$ with $n$ vertices. Throughout the paper we follow the natural convention of defining the empty sum as $0$, in particular, the friendship-bias of the root in a trivial tree is $0$. A vertex is said to be \textit{positive} when its friendship-bias is strictly positive, \textit{negative} when its friendship-bias is strictly negative, and \textit{neutral} otherwise. Define, for $n \in \N$,
\[
\begin{aligned}
N_n^+ &= \big|\big\{u \in V(\mathcal{T}_n)\colon\,\Delta_{u,n} > 0\big\}\big|,\\
N_n^- &= \big|\big\{u \in V(\mathcal{T}_n)\colon\,\Delta_{u,n} < 0\big\}\big|,\\
N_ n^0 &= n - N_n^+ - N_n^-.
\end{aligned}
\]


\subsection{Friendship paradox for finite trees}
\label{sec:FP-finite-tree}

The friendship paradox says that for an arbitrary finite graph $G_n$ the \emph{average friendship-bias} is non-negative \cite{SF,HHP1}, i.e.,
\[
\frac{1}{|V(G_n)|} \sum_{u \in V(G_n)} \Delta_{u,n} \geq 0,
\] 
where $V(G_n)$ denotes the vertex set of $G_n$. Equality holds if and only if all connected components of $G_n$  are regular. Still, there are graphs for which the number of negative vertices is larger than the number of positive vertices. This leads us to the following notion. 

\begin{definition}{\bf [Significance for finite trees]}
{\rm For a finite tree $\mathcal{T}_n$, the friendship paradox is said to be \textit{significant} when $N_n^+ \geq N_n^-$, and \textit{insignificant} otherwise. It is said to be \textit{strictly significant} when $N_n^+ > N_n^-$.} \hfill$\spadesuit$
\label{def:finite-significance}
\end{definition}

Our first theorem says that all finite trees are strictly significant, unless they consist of a single path of length $\neq 3$, and provides a lower bound for the gap between the numbers of positive and negative vertices.

\begin{theorem}{\bf [Finite trees are significant]}
\label{thm:finite-trees-significant}
For all finite trees $\mathcal{T}_n$,  the following statements hold:
\begin{itemize}
\item[(a)] 
If there is an $u\in V(\Tt_n)$ such that $d_u\ge 3$, then
\begin{equation}
\label{eq:difference-finite-tree}
N_n^+ - N_n^- \geq 2 + \sum_{u \in V(\Tt_n)} (d_u-3) \ind{d_u \geq 3} > 0.
\end{equation}
\item[(b)] 
If $\Tt_n$ is a path of size $n$, then
\[
N_n^+-N_n^-=\ind{n=3}.
\]
\end{itemize}
\end{theorem}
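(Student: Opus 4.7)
The plan is to treat the two parts separately, with the harder part (a) building on structural observations about leaves combined with the degree-sum identity.

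For part (b) I would compute the friendship-biases of all vertices in a path $v_1\sim v_2\sim\cdots\sim v_n$ directly, splitting into the cases $n\le 2$, $n=3$, and $n\ge 4$. For $n\le 2$ every vertex has degree $\le 1$ and every edge joins two degree-$1$ vertices, so $\Delta_{v_i,n}=0$ throughout and both counts vanish. For $n=3$ the two endpoints satisfy $\Delta_{v_1,3}=\Delta_{v_3,3}=2-1=1>0$ while $\Delta_{v_2,3}=(1+1)/2-2=-1<0$, giving $N_3^+-N_3^-=1$. For $n\ge 4$ the two endpoints are again positive with bias $1$, the near-endpoints $v_2$ and $v_{n-1}$ have bias $(1+2)/2-2=-\tfrac12<0$, and every deeper interior vertex has two neighbours of degree $2$ and thus bias $0$; hence $N_n^+=N_n^-=2$. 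Combining the three cases gives $N_n^+-N_n^-=\ind{n=3}$.

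For part (a) my plan rests on two key observations. First, in any tree with $n\ge 3$, every leaf $l$ has a unique neighbour $p(l)$ with $d_{p(l)}\ge 2$, so $\Delta_{l,n}=d_{p(l)}-1\ge 1>0$; consequently $N_n^+\ge L$, where $L$ denotes the number of leaves of $\Tt_n$. Second, separating the handshake identity $\sum_u d_u=2(n-1)$ by degree class yields
\[
\sum_{u\in V(\Tt_n):\,d_u\ge 3}(d_u-2)=L-2,
\]
so the proposed lower bound in \eqref{eq:difference-finite-tree} is simply $L-1$. The strategy is then to prove $N_n^+-N_n^-\ge L-1$ by induction, for instance on the number of branching vertices or on $n$: at each step I would prune a pendant path ending at a leaf, or contract a degree-$2$ chain adjacent to a branching vertex, and then track how the removal shifts the signs of friendship-biases of the few affected vertices near the pruning point. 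The base cases (stars) can be verified by direct computation.

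The main obstacle, and the step I expect to be technical, is the delicate behaviour of the friendship-bias near clusters of branching vertices: a degree-$2$ vertex sandwiched between two branching vertices can flip between positive, neutral and negative depending on the surrounding degrees, and two adjacent branching vertices can both be negative simultaneously. Consequently the inductive step cannot rely purely on local leaf counts, but must track the contribution of each degree-$2$ bridge as well as each edge between two branching vertices. A promising refinement is to attach to every branching vertex $u$ a dedicated set of $d_u-2$ ``bookkeeping'' positive vertices (drawn from the leaves or the positive path-vertices in its pendant subtrees), pairwise disjoint across branching vertices, together with one extra positive vertex globally, and to verify that this assignment is compatible with the recursion; this would realise the lower bound $1+\sum(d_u-2)\ind{d_u\ge 3}$ exactly and simultaneously yield the strict positivity asserted in the theorem.
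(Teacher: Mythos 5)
Your part~(b) is correct and complete, and is essentially the paper's own route (direct case analysis of paths, Lemma~2.2). Your two preliminary observations for part~(a) are also correct: every leaf of a tree on at least three vertices is positive (the paper's Lemma~2.1), and the handshake identity gives $\sum_{u}(d_u-2)\ind{d_u\ge 3}=L-2$, so the right-hand side of \eqref{eq:difference-finite-tree} equals $L-1$ with $L$ the number of leaves. That is a nice reformulation. However, the remainder of part~(a) is a plan rather than a proof: the inductive step (pruning a pendant path, or contracting a degree-$2$ chain, and tracking the resulting sign changes) is exactly where all of the work lies, and the ``bookkeeping'' assignment of $d_u-2$ dedicated positive vertices to each branching vertex is described as promising but is never constructed or verified. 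As written, the core of part~(a) is missing.

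More importantly, the obstacle you flag as merely technical --- that two adjacent branching vertices can both be negative --- in fact defeats the bound you are aiming at, so no completion of your plan can succeed. Consider the double star on $n=6$ vertices: two adjacent vertices $a\sim b$ with $d_a=d_b=3$, each carrying two pendant leaves. The four leaves are positive, while
\[
\Delta_{a,6}=\Delta_{b,6}=\tfrac{1}{3}(3+1+1)-3=-\tfrac{4}{3}<0 ,
\]
so $N_6^+-N_6^-=4-2=2$, whereas $1+\sum_{u}(d_u-2)\ind{d_u\ge 3}=3=L-1$. Thus \eqref{eq:difference-finite-tree} fails for this tree; your intended injection would require $N_6^+\ge N_6^-+3=5$ positive vertices, but only $4$ exist. (The paper's own exploration proof has the matching blind spot: in its Case~1 the passage to the line $N_{n_j}^+-N_{n_j}^-=N_{n_{j-1}}^+-N_{n_{j-1}}^-+d_u-1-\ind{u\text{ neutral}}$ silently drops a term $-2\ind{u\text{ negative}}$, and the extended leaf $u$ can indeed become negative --- precisely what happens at the second centre of the double star, where the claimed increment $d_u-2$ degrades to $d_u-3$.) What does survive, and what a corrected version of either argument yields, is significance itself, $N_n^+\ge N_n^-$ (indeed $N_n^+-N_n^-\ge d_\phi-1>0$ from the first branching point alone); if you pursue your induction, that weaker statement is the correct target, not $L-1$.
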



\subsection{Friendship paradox for infinite trees}
\label{sec:FP-infinite-tree}

Next, consider an infinite Galton-Watson tree $\mathcal{T}_\infty$ with offspring distribution $X$, and let $p_k=\PP(X=k)$, $k\in\N_0$. This model arises naturally as the local weak limit of large sparse random graphs, including the Erd\H{o}s–R\'enyi and configuration models. Our goal is to characterise the limiting distribution of vertex types, i.e., positive, neutral, or negative, especially when the tree is explored from the root. Unlike the deterministic case, the randomness of the degrees introduces fluctuations that affect the sign of the friendship paradox at different generations. Simulations of such trees for various Poisson offspring distributions are shown in Figure~\ref{fig:sim}, illustrating how the relative frequencies of vertex types change with the mean degree.

\begin{figure}[htbp]
\begin{center}
\includegraphics[width=0.49\linewidth]{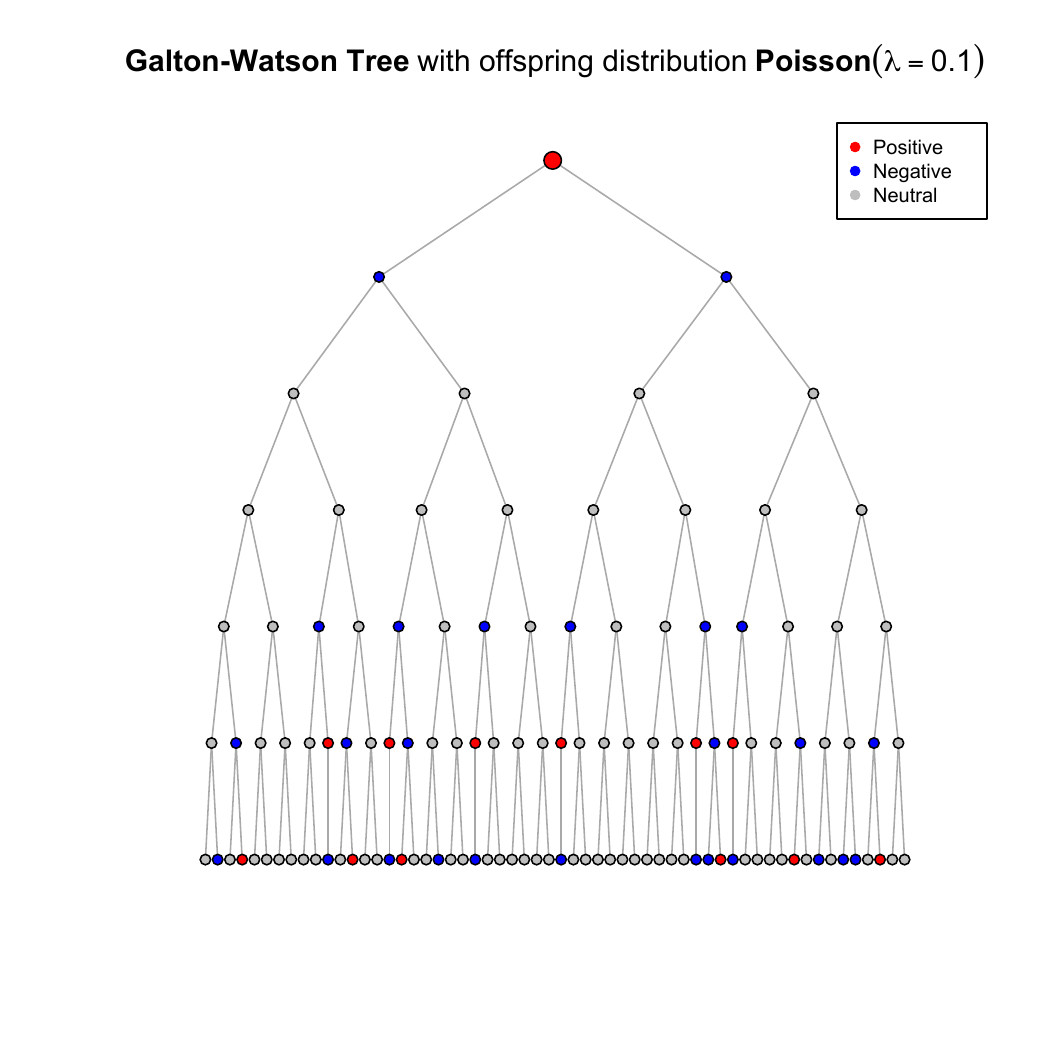}
\includegraphics[width=0.49\linewidth]{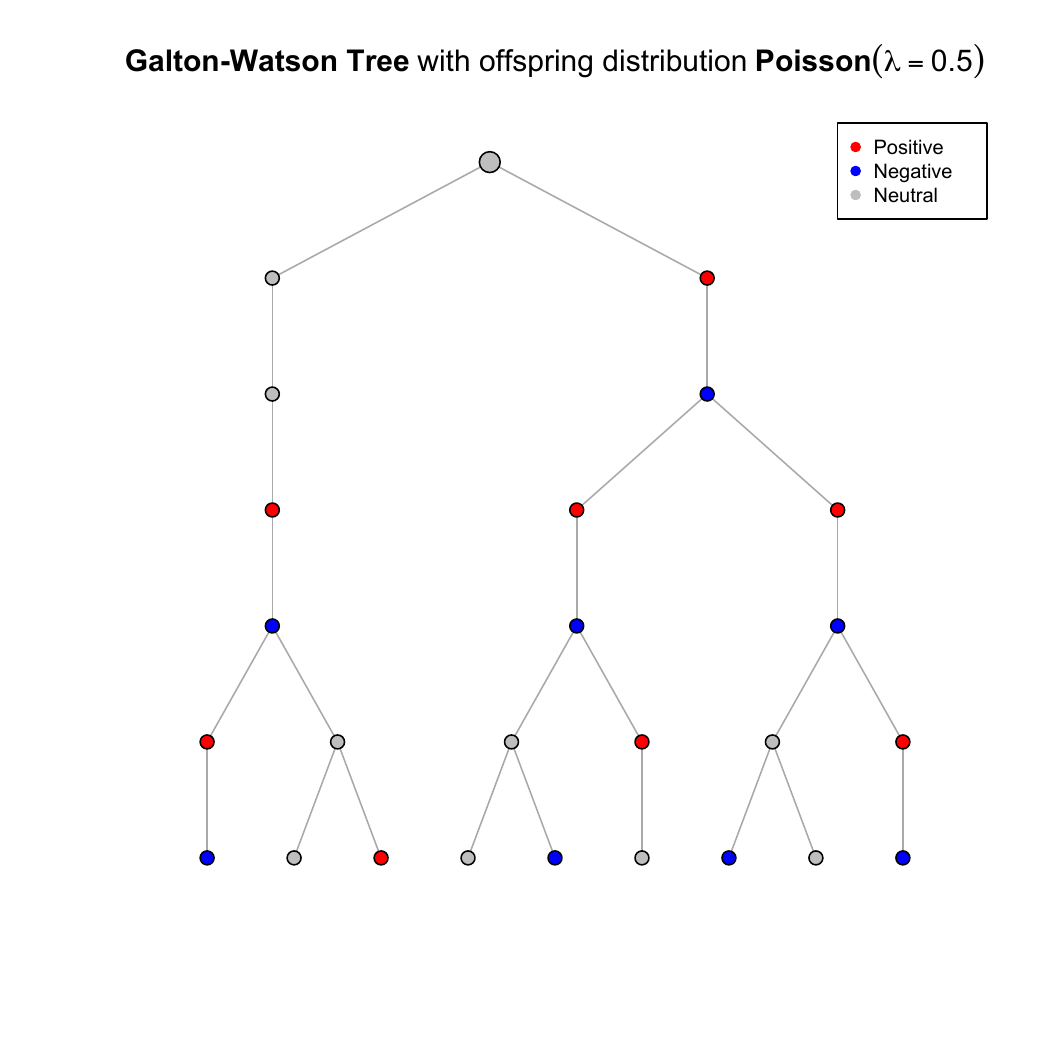}
\includegraphics[width=0.49\linewidth]{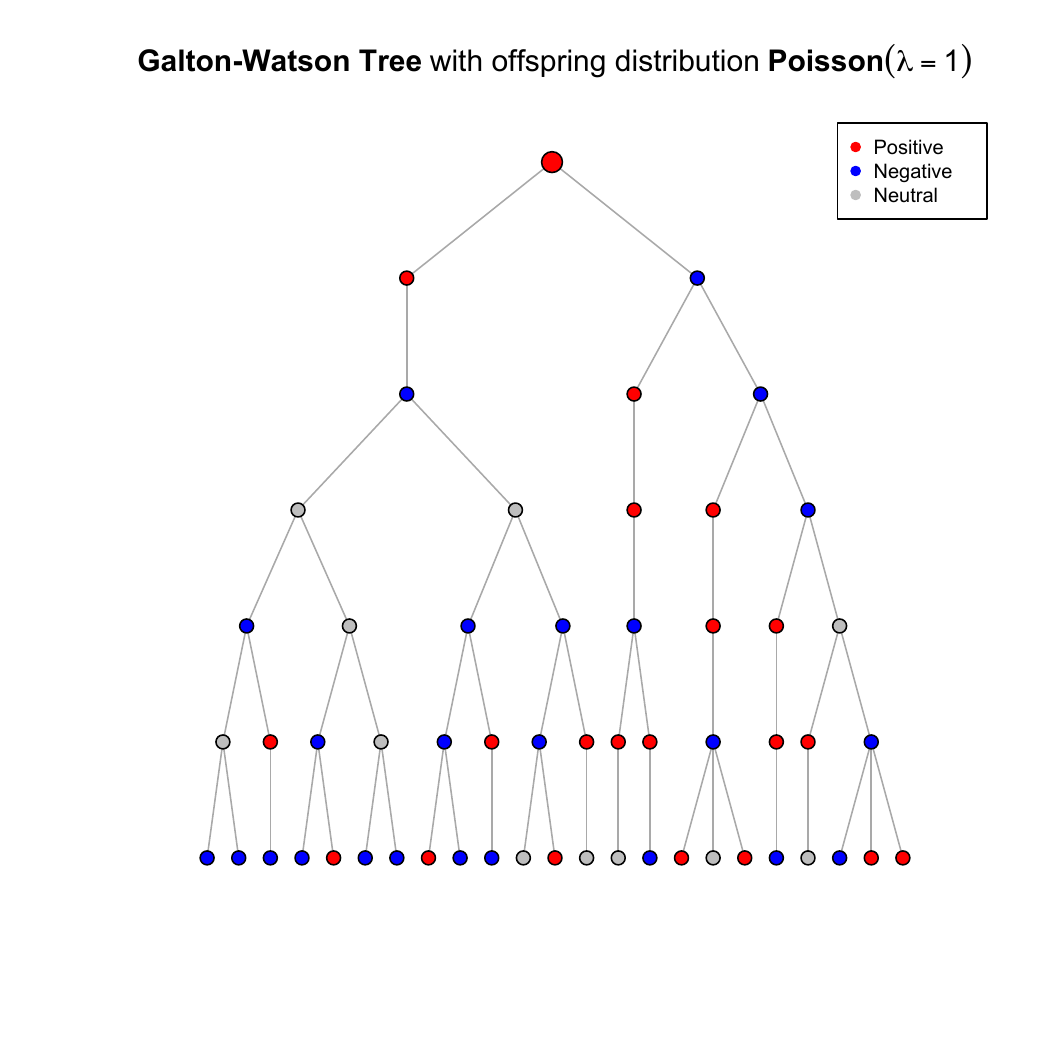}
\includegraphics[width=0.49\linewidth]{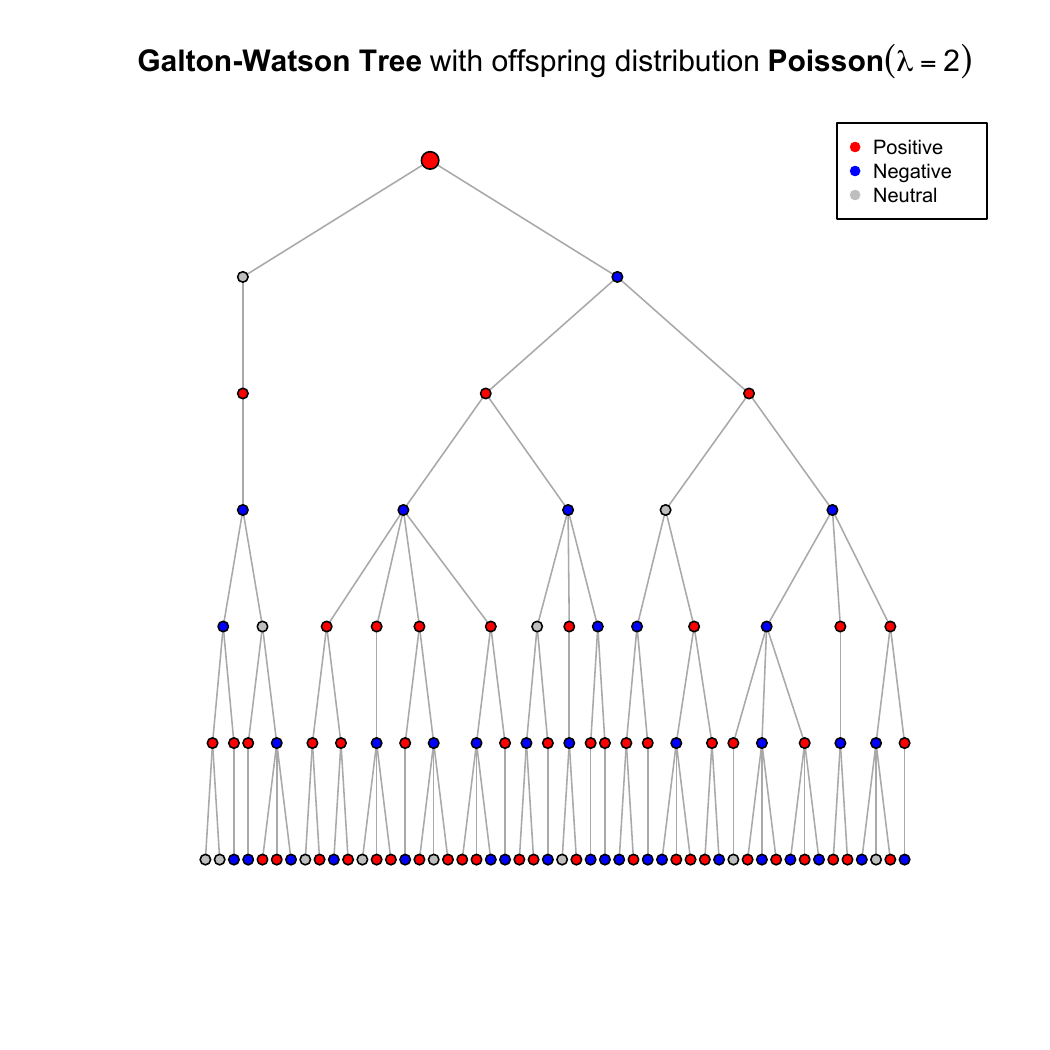}
\end{center}
\vspace{-1.5cm}
\caption{Four realisations of generations $0,\ldots,6$ of an infinite rooted Galton-Watson tree with an offspring distribution that is Poisson$(\lambda)$ with $\lambda = 0.1, 0.5, 1, 2$, respectively. Indicated are the locations of the positive (= red), neutral (= circle) and negative (= blue) vertices \emph{in the infinite tree}. Negative vertices have a tendency to be adjacent to positive vertices. Smaller values of $\lambda$ tend to produce a higher proportion of neutral vertices, while larger values of $\lambda$ tend to produce a higher proportion of positive vertices. In addition, for small values of $\lambda$ the root is almost always positive or neutral, while for large values of $\lambda$ the probability of the root being negative increases, although it remains smaller than the probability of the root being positive or neutral. This is consistent with the findings on the friendship paradox in sparse Erd\H{o}s-R\'enyi random graphs reported in \cite{HHP1}. Indeed, the local limit of the Erd\H{o}s-R\'enyi random graph with edge density $\lambda/n$ is the rooted Galton-Watson tree with offspring distribution Poisson$(\lambda)$ (see \cite[Theorem 2.18]{vdH2}).}
\label{fig:sim}
\end{figure}


\subsubsection{Density of vertex types in infinite Galton-Watson trees}

For $m \in \N_0$, let $\mathcal{T}_\infty^m=(V(\mathcal{T}_\infty^m),E(\mathcal{T}_\infty^m))$ be the induced subtree of $\mathcal{T}_\infty$ with vertex set
\[
V(\mathcal{T}_\infty^m) = \big\{u \in V(\mathcal{T}_\infty)\colon\, \mathrm{dist}(\phi,u)\leq m\big\},
\]
where $\mathrm{dist}(\cdot,\cdot)$ denotes the graph distance. Let $S = \{-,0,+\}$ denote the set of types of the vertices. Put $N_m = |V(\mathcal{T}_\infty^m)|$ and
\[
N_m^\chi = \big|\big\{u\in V(\mathcal{T}_\infty^m)\colon\,\mbox{$u$ has type $\chi$ in $\mathcal{T}_\infty$}\big\}\big|,
\qquad \chi \in S.
\]
The latter counts the number of vertices of type $\chi \in S$ in the infinite tree $\mathcal{T}_\infty$ that are at distance $\leq m$ from the root. To investigate the significance of the friendship paradox for infinite trees, we present the analogue of Definition \ref{def:finite-significance} for infinite trees. 

\begin{definition}{\bf [Significance for infinite trees]}
\label{def:infinite-significance}
{\rm  For an infinite tree $\mathcal{T}_\infty$, the friendship paradox is said to be \textit{significant} when $\liminf_{m\to\infty} (N_m^+ - N_m^-)/N_m \geq 0$, and \textit{insignificant} otherwise. It is said to be \textit{strictly significant} when $\liminf_{m\to\infty} (N_m^+ - N_m^-)/N_m > 0$.} \hfill$\spadesuit$
\end{definition}

Our second theorem identifies the densities of positive, neutral and negative vertices in an infinite Galton-Watson tree with offspring distribution $p=(p_k)_{k\in\N_0}$ satisfying 
\begin{equation}
\label{eq:pcond}
 \sum_{k\in\N} (k \log k)\, p_k <\infty, \qquad p_0=0, \quad p_1<1.
\end{equation}
The first condition is used in order to be able to apply the Kesten-Stigum theorem \cite{KS} and obtain a non-degenerate limit for the normalised population size of each generation. This plays a key role in understanding the asymptotic behaviour of the different vertex types (see, for instance, \eqref{eq:KSthm*} in the proof of Theorem~\ref{thm:densities-on-infinite-trees}). The assumptions $p_0=0$ and $p_1<1$ guarantee almost sure survival of the tree, and exclude the trivial case where every vertex has exactly one offspring. It would be interesting to investigate whether analogues of our results remain valid under weaker assumptions, for instance, under the supercritical condition $\mu = \sum_{k\in\N_0} k p_k >1$, conditionally on survival.

Write $\tilde{p} = (\tilde{p}_k)_{k\in\N_0}$ to denote the \emph{size-biased} offspring distribution given by $\tilde{p}_k = kp_k/\mu$.

\begin{theorem}{\bf [Densities of vertex types for infinite trees]}
\label{thm:densities-on-infinite-trees}
\begin{itemize}
\item[(a)] 
Let $\mathcal{T}_\infty$ be the infinite Galton-Watson tree with offspring distribution $p$ satisfying \eqref{eq:pcond}. Then, for each $\chi \in S$, there exists $f^\chi$  such that 
\[
\lim_{m\to\infty} \frac{N_m^\chi}{N_m} = f^\chi \quad \PP\text{-}\mathrm{a.s.}
\] 
\item[(b)] 
For $k \in \N$, let $S_k= \sum_{l=1}^k X_l$, where $(X_l)_{l=1}^k$ are drawn independently from $p$, and let $\tilde{X}$ be drawn independently from $\tilde{p}$. Then
\[
f^\chi = \sum_{k \in \N_0} p_k\,\PP\Big({\rm sign}\big[\tX+S_k-k(k+1)\big] = \chi\Big).
\]
\end{itemize}
\end{theorem}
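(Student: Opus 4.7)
My plan is to prove (a) and (b) together in three steps: derive a local sign formula for $\Delta_u$, compute generation-wise proportions in conditional expectation, then upgrade to an almost-sure limit via a variance bound, and finally pass to cumulative counts by a Ces\`aro argument. For a vertex $u$ at distance $\ge 2$ from $\phi$ one has $\pi(u)\neq\phi$, so $d_{\pi(u)}=1+X_{\pi(u)}$, $d_{u_j}=1+X_{u_j}$, and multiplying $\Delta_u$ by $d_u=1+X_u>0$ gives
\[
(1+X_u)\,\Delta_u \;=\; X_{\pi(u)}+\sum_{j=1}^{X_u}X_{u_j}-X_u(X_u+1),
\]
so the type of $u$ equals $\sgn\!\bigl(X_{\pi(u)}+S_{X_u}-X_u(X_u+1)\bigr)$. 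The root and its $X_\phi$ children obey slightly different formulas but contribute only $O(X_\phi)$ vertices and are a.s.\ negligible relative to $N_m$.

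Write $Z_k$ for the number of vertices at generation $k$ and $Z_k^\chi$ for the number of them of type $\chi$, and let $\mathcal{F}_{k-1}=\sigma(X_v\colon \mathrm{dist}(\phi,v)\le k-1)$. Grouping the generation-$k$ children of a parent $v$ at generation $k-1$ and using that the grand-child offspring numbers are fresh i.i.d.\ $\sim p$, one obtains
\[
\EE[Z_k^\chi\mid\mathcal{F}_{k-1}]=\sum_{v\text{ at gen.\ }k-1}X_v\,g_\chi(X_v),\qquad g_\chi(m)=\sum_{n\in\N_0}p_n\,\PP\!\bigl(\sgn(m+S_n-n(n+1))=\chi\bigr).
\]
Because $p_0=0$ forces $Z_{k-1}\to\infty$, Kolmogorov's strong law for the i.i.d.\ family $(X_v)_{v\in V(\Tt_\infty)}$, applied along a BFS enumeration of $V(\Tt_\infty)$, yields
\[
\frac{\EE[Z_k^\chi\mid\mathcal{F}_{k-1}]}{Z_k}=\frac{Z_{k-1}^{-1}\sum_v X_v g_\chi(X_v)}{Z_{k-1}^{-1}\sum_v X_v}\longrightarrow\frac{\sum_n np_n g_\chi(n)}{\mu}=\sum_n\tp_n g_\chi(n)=f^\chi\quad\text{a.s.,}
\]
the last equality being a rearrangement of sums that replaces the inner size-biased $\tp$-average by an expectation against $\tX$.

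To remove the conditional expectation, note that given $\mathcal{F}_{k-1}$ the pairs $(X_u,(X_{u_j})_j)$ for distinct $u$ at generation $k$ are independent, so $\mathrm{Var}(Z_k^\chi\mid\mathcal{F}_{k-1})=O(Z_k)$. The Kesten-Stigum theorem, which requires exactly the hypothesis $\sum_k k\log k\,p_k<\infty$, gives $Z_k/\mu^k\to W$ a.s.\ with $W>0$ (survival is sure since $p_0=0$), hence $\sum_k Z_k^{-1}<\infty$ a.s. Chebyshev and Borel-Cantelli then force $(Z_k^\chi-\EE[Z_k^\chi\mid\mathcal{F}_{k-1}])/Z_k\to 0$ a.s., and therefore $Z_k^\chi/Z_k\to f^\chi$ a.s. A Ces\`aro argument with the exponentially growing weights $Z_k\sim W\mu^k$ finally transfers the level-wise limit to $N_m^\chi/N_m\to f^\chi$ a.s., proving (a) with the explicit formula (b). The delicate point is coupling the tree-level Kesten-Stigum growth with the within-generation empirical averages: both are routed through the same Borel-Cantelli series $\sum_k Z_k^{-1}$, which is where the $x\log x$ hypothesis is used decisively; the remaining estimates are then routine.
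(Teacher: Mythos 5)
Your proposal is correct and follows essentially the same architecture as the paper's proof: the parent--child--grandchild sign statistic $X_{\pi(u)}+S_{X_u}-X_u(X_u+1)$, an almost-sure generation-wise limit $Z_k^\chi/Z_k\to f^\chi$, the Kesten--Stigum theorem to guarantee geometric growth $Z_k\sim W\mu^k$ with $W>0$, and a weighted Ces\`aro/splitting step to pass to the cumulative ratio $N_m^\chi/N_m$. The only real difference is in the middle step: the paper gets $Z_m^\chi/Z_{m-1}\to\mu f^\chi$ by applying the strong law directly to the i.i.d.\ per-parent counts $X_v^\chi$ (number of type-$\chi$ children of $v$), whereas you condition on $\mathcal{F}_{k-1}$ and control the fluctuations by a conditional variance bound, Chebyshev and Borel--Cantelli via $\sum_k Z_k^{-1}<\infty$; both routes are valid (each leaves the same small, standard point implicit, namely that averaging i.i.d.\ quantities over the random block of generation-$(k-1)$ vertices in a BFS enumeration requires $N_{k-2}/Z_{k-1}$ to stay bounded, which again comes from Kesten--Stigum).
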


The intuition behind Theorem \ref{thm:densities-on-infinite-trees} is as follows. The neighbours of $u$ are its parent and its children. The offspring of the parent is drawn from the size-biased distribution, and is distributed as $\tX$. The offspring of the children are independent copies of $X$. Conditioned on $X_u=k$, with $X_u$ the number of offspring of vertex $u$,  the degree of $u$ is $k+1$, and the total degree of the children of $u$ is distributed as $S_k+k$, where $S_k$ is independent of everything else. The type of $u$ is thus determined in distribution by the sign of 
\[
\frac{1}{k+1}\big(\tX+1+S_k+k\big)-(k+1)=\frac{1}{k+1}\big(\tX+S_k\big)-k,
\]
i.e., by 
\[
{\rm sign}\big[\tX+S_k-k(k+1)\big].
\]

In contrast to finite trees, not all infinite trees are significant.

\begin{example}{\bf [Not all infinite trees are significant]}
\label{ex:GW-non-significant}
{\rm Let $\mathcal{T}_\infty$ be a Galton-Watson tree with an offspring distribution $p$.
\begin{itemize}
\item[(a)] Let $p_1= q \in (0,1)$, $p_a = 1-q$ for some $a \in \N\setminus\{1\}$, and $p_k = 0$ for all $k \notin \{1,a\}$. Then for $q\geq\tfrac12$ and $a$ large enough the friendship paradox for $\mathcal{T}_\infty$ is $\PP$-$\mathrm{a.s.}$ significant, while for $q<\tfrac12$ and $a$ large enough the friendship paradox for $\mathcal{T}_\infty$ is $\PP$-$\mathrm{a.s.}$ insignificant. In addition, it is $\PP$-$\mathrm{a.s.}$ strictly significant for $q>\tfrac12$ and $a$ large enough.
\item[(b)] Let $p_1 = q \in (0,1)$, $p_{2} =\hat{q} \in (0,1)$, $p_a = 1-q-\hat{q}\in (0,1)$ for some $a \in \N\setminus\{1,2\}$, and $p_k = 0$ for all $k \notin \{1,2,a\}$. Then for $q+\hat{q}\geq\tfrac12$ and $a$ large enough the friendship paradox for $\mathcal{T}_\infty$ is $\PP$-$\mathrm{a.s.}$ significant, while for $q+\hat{q}<\tfrac12$ and $a$ large enough the friendship paradox for $\mathcal{T}_\infty$ is $\PP$-$\mathrm{a.s.}$ insignificant. In addition, it is $\PP$-$\mathrm{a.s.}$ strictly significant for $q+\hat{q}>\tfrac12$ and $a$ large enough. \hfill$\spadesuit$
\end{itemize}
}
\end{example}

In Example~\ref{ex:GW-non-significant}(a), all (non-root) vertices with $a$ offspring are either negative or neutral. They can be neutral only if their parent and all their offspring have $a$ offspring themselves, and their parent is not the root. This probability is arbitrarily small when $a$ is large enough. All (non-root) vertices with $1$ offspring are either positive or neutral. They are positive if either their parent or their child have $a$ offspring, and their parent is not the root. By taking $a$ large enough, we can make the fraction of vertices whose parent has $a$ offspring arbitrarily large. Then, most of the vertices with $1$ offspring are positive and most vertices with $a$ offspring are negative. This makes the friendship paradox asymptotically (as $a\to\infty$) significant for $q \geq \tfrac12$ and insignificant for $q < \tfrac12$. Example~\ref{ex:GW-non-significant}(b) is similar, except that now the vertices of ``intermediate'' degree $2$ can be positive, negative or neutral. It turns out that this does not change the result.


\subsubsection{Correlations of vertex types in infinite Galton-Watson trees}

It is interesting to investigate how the positive, neutral and negative vertices are arranged on the tree. A natural question is whether negative vertices tend to be adjacent to positive vertices or not. Figure~\ref{fig:sim} suggests yes. 

Our third theorem quantifies the correlations between the vertex types. For convenience, suppose that the edges in the tree are directed from parents to children. Define
\[
N_m^{\tilde{\chi}\chi} = \big|\big\{(u,ui)\in E(\mathcal{T}_\infty^m)\colon\, 
\mbox{$u$ has type $\tchi$ and $ui$ has type $\chi$ in $\mathcal{T}_\infty$}\big\}\big|,
\]
where $(u,ui)$ denotes a directed edge from $u$ to $ui$, and $E(\mathcal{T}_\infty^m)$ now denotes the set of all directed edges. In what follows we use tilde-notations like $\tchi,\tk$ for the parent vertex. 

\begin{theorem}{\bf [Correlations of vertex types for infinite trees]}
\label{thm:correlations-on-infinite-trees}
\begin{itemize}
\item[(a)] 
Let $\mathcal{T}_\infty$ be the infinite Galton-Watson tree with offspring distribution $p$ satisfying \eqref{eq:pcond}. Then, for each $\tchi,\chi \in S$, there exists $f^{\tchi\chi}$ such that 
\[
\lim_{m\to\infty} \frac{N_m^{\tchi\chi}}{N_m} = f^{\tchi\chi} \quad \PP\text{-}\mathrm{a.s.}
\] 
\item[(b)] 
For $k \in \N$,
\[
\begin{aligned}
f^{\tchi\chi} &= \sum_{\tk \in \N_0} \sum_{k \in \N_0} \tilde{p}_{\tilde{k}}\, p_k\,
\PP\Big({\rm sign}\big[\tX+{S}_{\tilde{k}-1}+k-\tilde{k}(\tilde{k}+1)\big]=\tchi\Big)\\
&\qquad \times \PP\Big({\rm sign}\big[\tilde{k}+S_k-k(k+1)\big]=\chi\Big),
\end{aligned}
\]
where $S_k$ and $\tilde{X}$ are as defined in Theorem \ref{thm:densities-on-infinite-trees}.
\end{itemize}
\end{theorem}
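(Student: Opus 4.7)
The plan is to parallel the proof of Theorem~\ref{thm:densities-on-infinite-trees}, with vertex counts replaced by directed-edge counts, and then to identify the explicit limit in (b) via a spinal / size-biased computation of a uniformly sampled edge. Write $N_m^{\tchi\chi}=\sum_{j=1}^{m}E_j^{\tchi\chi}$, where $E_j^{\tchi\chi}$ denotes the number of directed edges of joint type $(\tchi,\chi)$ going from generation $j-1$ to generation $j$. Under \eqref{eq:pcond} the Kesten--Stigum martingale $W_j=Z_j/\mu^j$ converges almost surely to some $W>0$, so $N_m\sim W\mu^{m+1}/(\mu-1)$ a.s. Part (a) then reduces, exactly as in the proof of Theorem~\ref{thm:densities-on-infinite-trees}, to the per-generation limit
\begin{equation}
\label{eq:prop-edge-gen}
\frac{E_j^{\tchi\chi}}{\mu^j}\xrightarrow[j\to\infty]{\PP\text{-a.s.}} W\cdot f^{\tchi\chi},
\end{equation}
combined with a standard Cesaro summation.

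The naive per-vertex decomposition $E_j^{\tchi\chi}=\sum_{u\in\mathrm{gen}(j-1)}\Psi^{\tchi\chi}(u)$, with $\Psi^{\tchi\chi}(u)=\ind{u\text{ has type }\tchi}\,|\{i\colon ui\text{ has type }\chi\text{ in }\Tt_\infty\}|$, is not directly amenable to a branching-process strong law, because $\Psi^{\tchi\chi}(u)$ depends on $X_{\mathrm{parent}(u)}$, which lies outside the subtree rooted at $u$. I would resolve this by \emph{regrouping by grandparent}: set
\[
\Phi^{\tchi\chi}(v) = \sum_{u\colon u\text{ child of }v}\Psi^{\tchi\chi}(u),\qquad E_j^{\tchi\chi} = \sum_{v\in\mathrm{gen}(j-2)}\Phi^{\tchi\chi}(v)\quad (j\ge 2).
\]
Now $\Phi^{\tchi\chi}(v)$ is a non-negative functional of the subtree rooted at $v$ (up to depth $3$), dominated by $\sum_{u\,\text{child of }v}X_u$, whose mean is $\mu^2<\infty$. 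Hence $\{\Phi^{\tchi\chi}(v)\}_{v\in\mathrm{gen}(j-2)}$ is an i.i.d.\ family, and the Biggins--Nerman strong law for branching processes with a random characteristic---available under \eqref{eq:pcond} and already employed for Theorem~\ref{thm:densities-on-infinite-trees}---yields $E_j^{\tchi\chi}/\mu^{j-2}\to W\,\EE[\Phi^{\tchi\chi}(v)]$ a.s., which is \eqref{eq:prop-edge-gen} with $f^{\tchi\chi}=\mu^{-2}\,\EE[\Phi^{\tchi\chi}(v)]$.

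To identify $f^{\tchi\chi}$ I would unwind $\EE[\Phi^{\tchi\chi}(v)]$ by conditioning on $X_v=m$, on the offspring $X_u=\tk$ of a child $u$ of $v$, and on the offspring $X_{ui}=k$ of a distinguished grandchild $ui$. Symmetry over the $m$ children of $v$ and over the $\tk$ children of $u$ produces extra factors $m$ and $\tk$ in the sum; using $m\,p_m=\mu\,\tp_m$ and $\tk\,p_\tk=\mu\,\tp_\tk$, these factors turn the plain laws on $m$ and $\tk$ into the size-biased law $\tp$---in line with the roles of $\tilde X$ (grandparent's offspring) and $\tk$ (parent's offspring) in the statement of (b). The remaining $\tk-1$ siblings of $ui$ and the $k$ children of $ui$ contribute independent sums $S_{\tk-1}$ and $S_k$ of i.i.d.\ copies of $X$, all independent of $\tilde X\sim\tp$ because they live in disjoint subtrees of $\Tt_\infty$. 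A direct computation with $d_u=\tk+1$ and $d_{ui}=k+1$ yields
\[
(1+\tk)\Delta_u = \tilde X + S_{\tk-1} + k - \tk(\tk+1),\qquad (1+k)\Delta_{ui} = \tk + S_k - k(k+1),
\]
and the conditional independence of the two signs given $(\tk,k)$ produces the product structure of part (b).

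The main technical point is \eqref{eq:prop-edge-gen}: under the mild moment assumption \eqref{eq:pcond}, one cannot rely on variance estimates since $\EE[X^2]<\infty$ is not assumed. The regrouping-by-grandparent step is precisely what brings the problem into the scope of the random-characteristic strong law used for Theorem~\ref{thm:densities-on-infinite-trees}, so no new probabilistic machinery is needed beyond it. The remaining bookkeeping---handling the boundary generations $j=0,1$ and carrying out the Cesaro passage from per-generation convergence to $N_m^{\tchi\chi}/N_m$---is routine.
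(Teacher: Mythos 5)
Your proposal matches the paper's proof in all essentials: the regrouping-by-grandparent functional $\Phi^{\tchi\chi}(v)$ is exactly the paper's $Y_v^{\tchi\chi}$, the size-biased conditioning on $X_v$, $X_u=\tk$, $X_{ui}=k$ with the conditional independence of the two signs is the same computation yielding $\EE[Y_v^{\tchi\chi}]=\mu^2 f^{\tchi\chi}$, and the passage from per-generation to cumulative ratios is the same Kesten--Stigum argument. The only cosmetic difference is that you invoke a Biggins--Nerman-type strong law where the paper simply applies the classical SLLN to the i.i.d.\ family $\{Y_v^{\tchi\chi}\}_{v\in\partial\Tt_\infty^{m-2}}$ together with $Z_{m-2}\to\infty$.
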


\begin{remark}{\bf [Original expression for correlation]}
{\rm Let $X$ be a random variable drawn independently from $p$, and let $\tX^*$ be an independent copy of $\tX$. Let $S_k^*$ be the sum of $k$ i.i.d.\ random variables drawn from $p$, independent of $X$, $\tX$, $\tX^*$ and ${S}_{\tilde{k}-1}$. The proof of Theorem \ref{thm:correlations-on-infinite-trees} shows that
\begin{align}
\label{AP-fxx}
&f^{\tchi\chi}\nonumber\\ 
&=\sum_{\tk\in\N_0} \sum_{k\in\N_0}  \tilde{p}_{\tk}\,p_k\,
\PP\Big({\rm sign}\big[\tX^*+k+{S}_{\tilde{k}-1}-\tk(\tk+1)\big]=\tchi
\, ,\,{\rm sign}\big[\tk+S_k^* -k(k+1)\big]=\chi\Big)\nonumber\\
&=\PP\Big({\rm sign}\big[\tX^*+X+{S}_{\tX-1}-\tX(\tX+1)\big]=\tchi
\, ,\,{\rm sign}\big[\tX+S_X^* -X(X+1)\big]=\chi\Big),
\end{align}
from which the formula in Theorem~\ref{thm:correlations-on-infinite-trees}(b) follows.
}\hfill$\spadesuit$ 
\end{remark}

The intuition behind Theorem \ref{thm:correlations-on-infinite-trees} is as follows. Consider an edge between a parent $u$ and one of its children $v$. The neighbours of $u$ are its parent and its children (including $v$), and the neighbours of $v$ are its parent $u$ and its own children. The number of offspring of $u$ (as a parent of $v$) and of its own parent are drawn independently from the size-biased distribution, and are distributed as $\tX$ and $\tX^*$, respectively. The number of offspring of $v$ is an independent copy of $X$. The total degree of the children of $u$ is distributed as $X + S_{\tX-1}+\tX$, while the total degree of the children of $v$ is distributed as $S_X^*+X$. Thus, the type of $u$ is determined in distribution by the sign of
\[
\frac{1}{\tX+1}\big(\tX^*+1+X+{S}_{\tX-1}+\tX\big)-(\tX+1)
= \dfrac{1}{\tX+1}\big(\tX^*+X+{S}_{\tX-1}-\tX(\tX+1)\big),
\]
i.e., by 
\[
\tX^*+X+{S}_{\tX-1}-\tX(\tX+1).
\]
Also the type of $v$ is determined in distribution by the sign of 
\[
\frac{1}{X+1}\big(\tX+1+S_X^*+X \big)-(X+1) 
= \frac{1}{X+1}\big(\tX+S_X^* -X(X+1)\big),
\]
i.e., by 
\[
\tX+S_X^* -X(X+1).
\]

Note that the left- and right-marginals of $f^{\tchi\chi}$ are \emph{different}. Indeed, 
\[
\sum_{\tchi\in S}f^{\tchi\chi} = \sum_{\tk\in\N_0} \sum_{k\in\N_0} \tilde{p}_{\tilde{k}}\, p_k\,
\PP\Big({\rm sign}\big[\tilde{k}+S_k-k(k+1)\big]=\chi\Big) = f^\chi
\]
is the density of \emph{childen} of type $\chi$, while 
\begin{align*}
\sum_{\chi\in S} f^{\tchi\chi}
&=\sum_{\tk\in\N_0}\sum_{k\in\N_0} \tilde{p}_{\tilde{k}}\, p_k\,
\PP\Big({\rm sign}\big[\tX+{S}_{\tilde{k}-1}+k-\tilde{k}(\tilde{k}+1)\big]=\tchi\Big)\\
&=\sum_{\tk\in\N_0} \tilde{p}_{\tilde{k}}\,
\PP\Big({\rm sign}\big[\tX+{S}_{\tilde{k}-1}+X-\tilde{k}(\tilde{k}+1)\big]=\tchi\Big)\\
&= \sum_{\tk\in\N_0}  \tilde{p}_{\tilde{k}}\,
\PP\Big({\rm sign}\big[\tX+{S}_{\tilde{k}}-\tilde{k}(\tilde{k}+1)\big]=\tchi\Big)
=: \tf^{\tchi}
\end{align*}
is the density of \emph{parents} of type $\tchi$, with $X$ a random variable drawn from $p$ and independent of everything else. The latter density is size-biased because each parent $u$ counts $X_u$ times. 

Our fourth and last theorem states that positive vertices are negatively correlated subject to a certain condition on the offspring distribution $p$.

\begin{theorem}{\bf [A sufficient criterion for negative correlation]}
\label{thm:plusplus-correlations-on-infinite-trees}
Let $\mathcal{T}_\infty$ be the infinite Galton-Watson tree with offspring distribution $p$ satisfying \eqref{eq:pcond}. Let $S_k$ be as defined in Theorem \ref{thm:densities-on-infinite-trees}. If, for each $\tk$ in the support of $\tilde{p}$, 
\begin{equation}
\label{eq:mono}
k \mapsto \PP\big(\tk + S_k - k(k+1)>0\big) \text{ is non-increasing on the support of $p$},
\end{equation}
then $f^{++} \leq \tf^+ f^+$.
\end{theorem}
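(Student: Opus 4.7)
The plan is to express all three densities as expectations against a common product measure and then apply Chebyshev's sum inequality twice---once conditionally, once marginally. Let $\tilde{K}\sim\tilde{p}$ and $K\sim p$ be independent, and set
\begin{align*}
A(\tk,k) &= \PP\bigl(\tX+S_{\tk-1}+k-\tk(\tk+1)>0\bigr),\\
B(\tk,k) &= \PP\bigl(\tk+S_k-k(k+1)>0\bigr),
\end{align*}
so that Theorem~\ref{thm:correlations-on-infinite-trees}(b) gives $f^{++}=\EE[A(\tilde{K},K)\,B(\tilde{K},K)]$. Summing $A$ over $k$ against $p_k$ and using the in-distribution identity $S_{\tk-1}+X\stackrel{d}{=}S_{\tk}$ with $X\sim p$ independent collapses the convolution and yields $\EE[A(\tilde{K},K)]=\tf^{+}$, while summing $B$ over $\tk$ against $\tilde{p}_{\tk}$ gives $\EE[B(\tilde{K},K)]=f^{+}$. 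The claim thus reduces to showing that $A$ and $B$ are negatively correlated under the product measure $\tilde{p}\otimes p$.

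First I would fix $\tilde{K}=\tk$ and apply Chebyshev's sum inequality in $K$ alone. For fixed $\tk$, the map $k\mapsto A(\tk,k)$ is non-decreasing because $k$ enters the event additively with a positive sign, while $k\mapsto B(\tk,k)$ is non-increasing on the support of $p$ by the standing hypothesis \eqref{eq:mono}. Setting $a(\tk):=\EE_K[A(\tk,K)]$ and $b(\tk):=\EE_K[B(\tk,K)]$, Chebyshev yields $\EE_K[A(\tk,K)\,B(\tk,K)]\leq a(\tk)\,b(\tk)$, and taking the expectation over $\tilde{K}$ gives $f^{++}\leq\EE[a(\tilde{K})\,b(\tilde{K})]$.

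The main subtlety lies in securing opposite monotonicities of $a$ and $b$ in $\tk$. The function $b(\tk)=\sum_k p_k\,\PP(\tk+S_k-k(k+1)>0)$ is evidently non-decreasing in $\tk$, since $\tk$ enters each summand positively. For $a$, the same collapse $S_{\tk-1}+X\stackrel{d}{=}S_{\tk}$ used above yields
\[
a(\tk)=\PP\bigl(\tX+S_{\tk}-\tk(\tk+1)>0\bigr) = \sum_{j} \tilde{p}_j\,\PP\bigl(j+S_{\tk}-\tk(\tk+1)>0\bigr),
\]
a mixture over $j$ in the support of $\tilde{p}$. Since the support of $\tilde{p}$ is contained in the support of $p$, applying hypothesis \eqref{eq:mono} with $j$ playing the role of $\tk$ shows that each summand is non-increasing in $\tk$, so $a$ is non-increasing. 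A second application of Chebyshev's sum inequality in the single variable $\tilde{K}$ then gives $\EE[a(\tilde{K})\,b(\tilde{K})]\leq\EE[a(\tilde{K})]\,\EE[b(\tilde{K})]=\tf^{+}f^{+}$, completing the argument.
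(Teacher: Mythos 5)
Your proof is correct, and it takes a genuinely different route from the paper's. You write $f^{++}=\EE[A(\tilde K,K)B(\tilde K,K)]$ under the product measure $\tilde p\otimes p$ and apply the one-dimensional Chebyshev (correlation) inequality twice: first in $k$ for fixed $\tk$, where $A$ is non-decreasing and $B$ is non-increasing by \eqref{eq:mono}; then in $\tk$ for the marginalised functions $a,b$, where $b$ is trivially non-decreasing and $a(\tk)=\PP(\tX+S_{\tk}-\tk(\tk+1)>0)$ is non-increasing by reading \eqref{eq:mono} with the roles of the two arguments swapped (legitimate since $p_0=0$ makes the supports of $p$ and $\tilde p$ coincide). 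The identities $\EE[A]=\tf^+$ and $\EE[B]=f^+$ via $S_{\tk-1}+X\stackrel{d}{=}S_{\tk}$ match the paper's marginal computations. The paper instead factors $f^{++}=\tf^+\,\PP(\text{child }+\mid\text{parent }+)$ and bounds the conditional probability by $f^+$, which requires two auxiliary facts (Lemma~\ref{l:aux}): that the monotonicity in \eqref{eq:mono} survives conditioning on $\{X>k\}$, and that conditioning on positivity makes the offspring count stochastically smaller. Your argument exploits exactly the same two mechanisms (a positive parent tends to have few children and a child of a positive parent tends to have many), but packages them as two applications of a single elementary inequality on a product space, avoiding the conditional manipulations and the stochastic-domination lemma entirely; the paper's route, on the other hand, makes the probabilistic interpretation of each step more transparent.
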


Let $\mathcal{T}_\infty$ be the infinite Galton-Watson tree with offspring distribution Poisson$(\lambda)$. Define
\begin{equation*}
f(\tilde{k}, k, \lambda) = \PP\big(\tilde{k} + S_k - k(k+1) > 0\big),
\end{equation*}
where $S_k$ denotes the sum of $k$ i.i.d.\ Poisson$(\lambda)$ random variables. Checking correlation in this setting is analytically challenging, and verifying the monotonicity condition \eqref{eq:mono} is nontrivial as well. However, it is straightforward to check condition \eqref{eq:mono} numerically by using software such as R. In particular, computations show that the condition fails when $6.39 \leq \lambda \leq 41.17$: for example, $f(1,1,\lambda) < f(1,2,\lambda)$ in this range. Therefore, no conclusion about the correlation can be made based on Theorem \ref{thm:plusplus-correlations-on-infinite-trees}. On the other hand, numerical evidence (as shown in Figure~\ref{fig:sim2}) suggests that condition \eqref{eq:mono} does hold for smaller values of $\lambda$. Hence, in these cases, we expect that positive vertices are negatively correlated.

\begin{figure}[htbp]
\begin{center}
\includegraphics[width=0.49\linewidth]{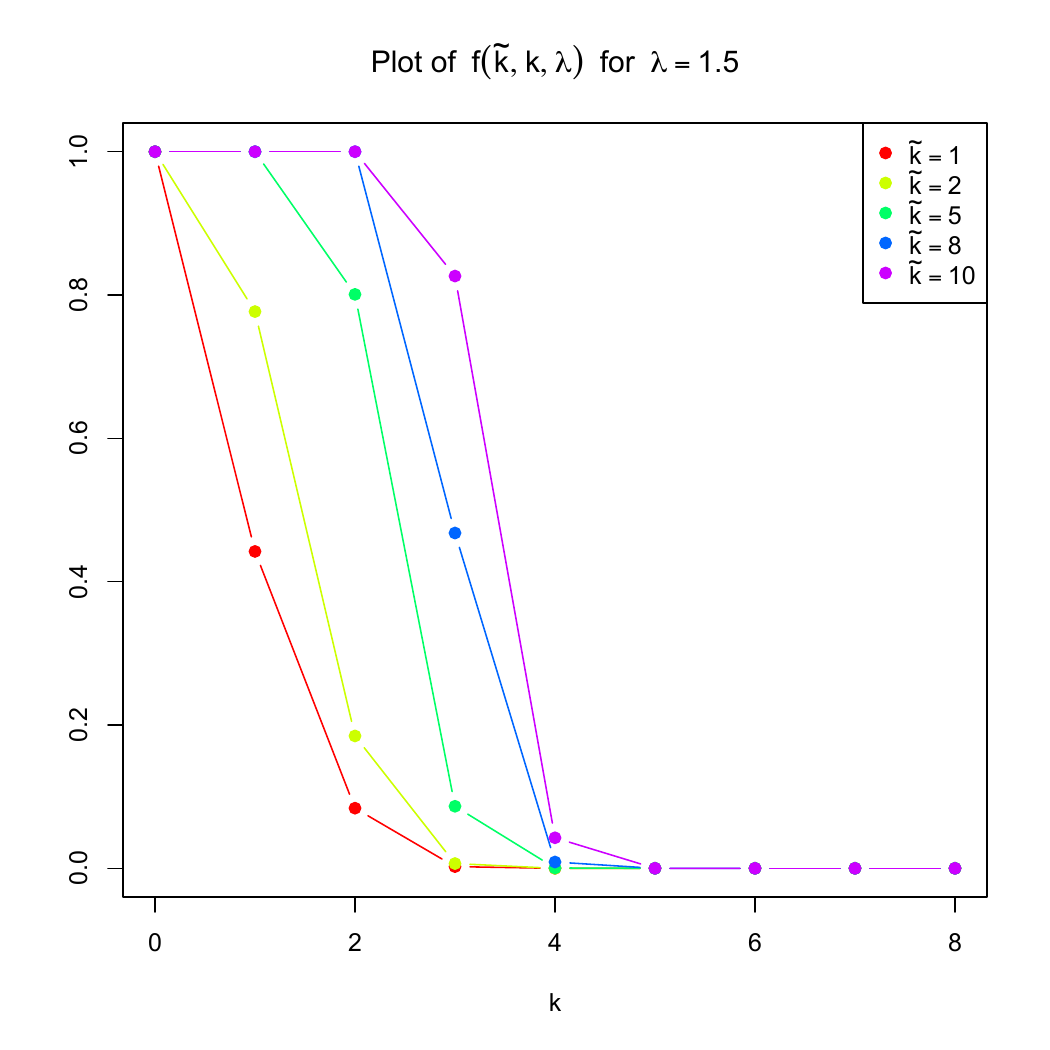}
\includegraphics[width=0.49\linewidth]{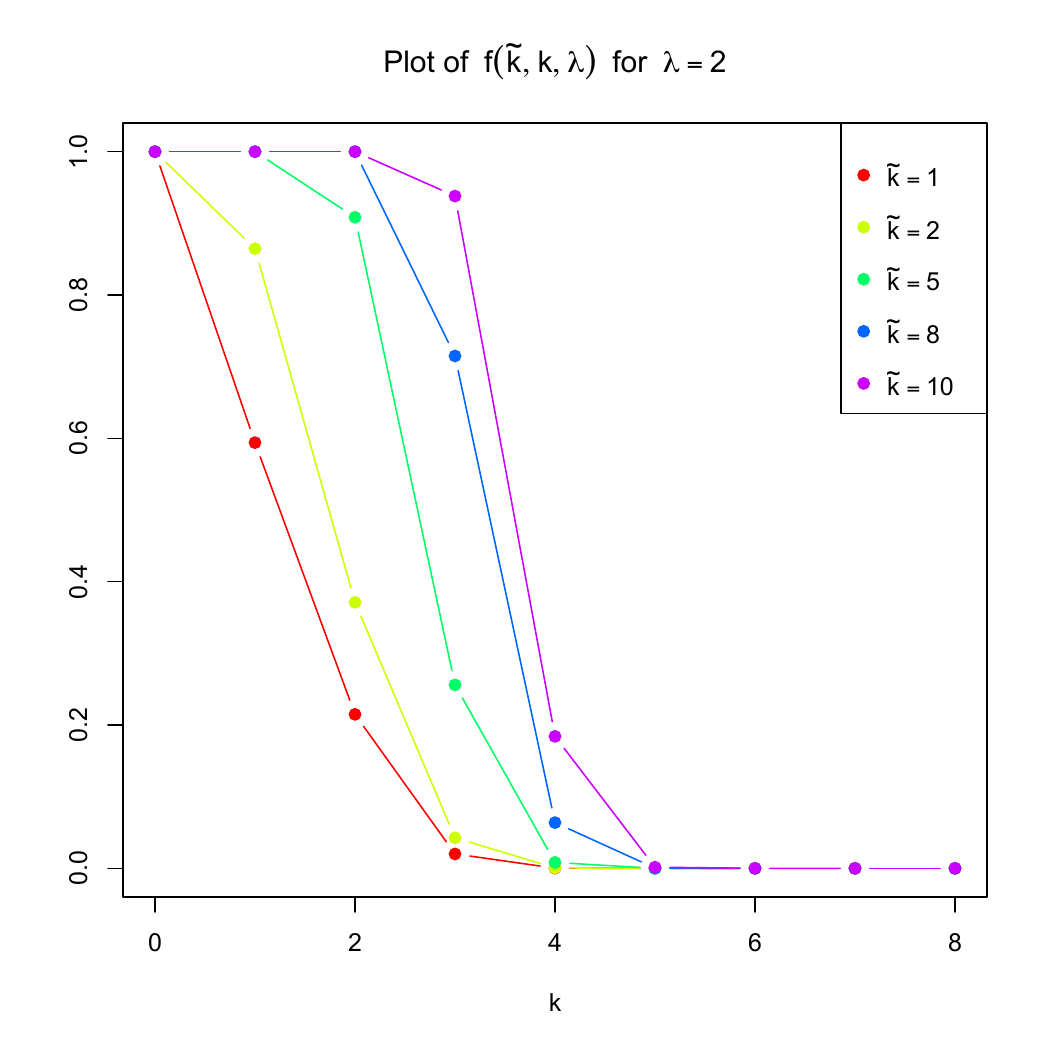}
\includegraphics[width=0.49\linewidth]{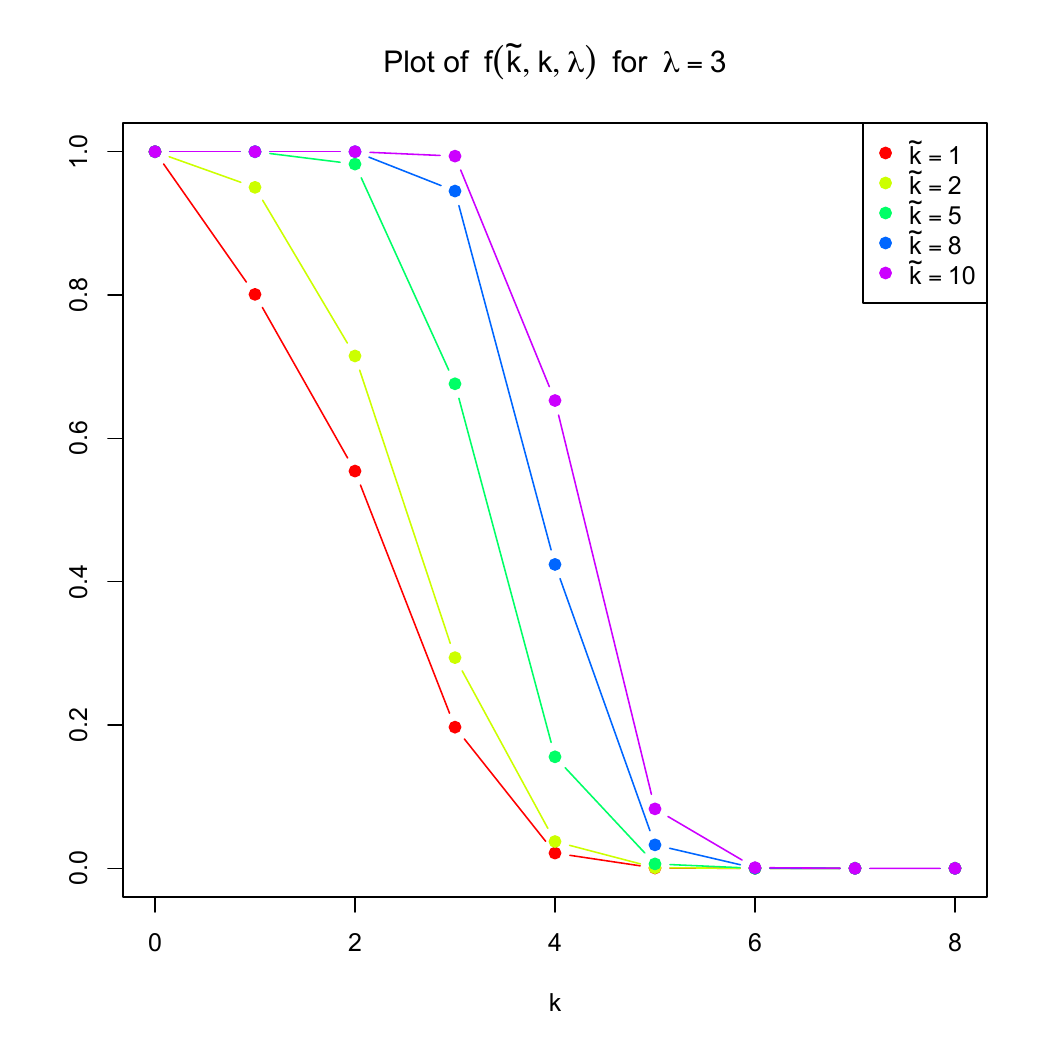}
\includegraphics[width=0.49\linewidth]{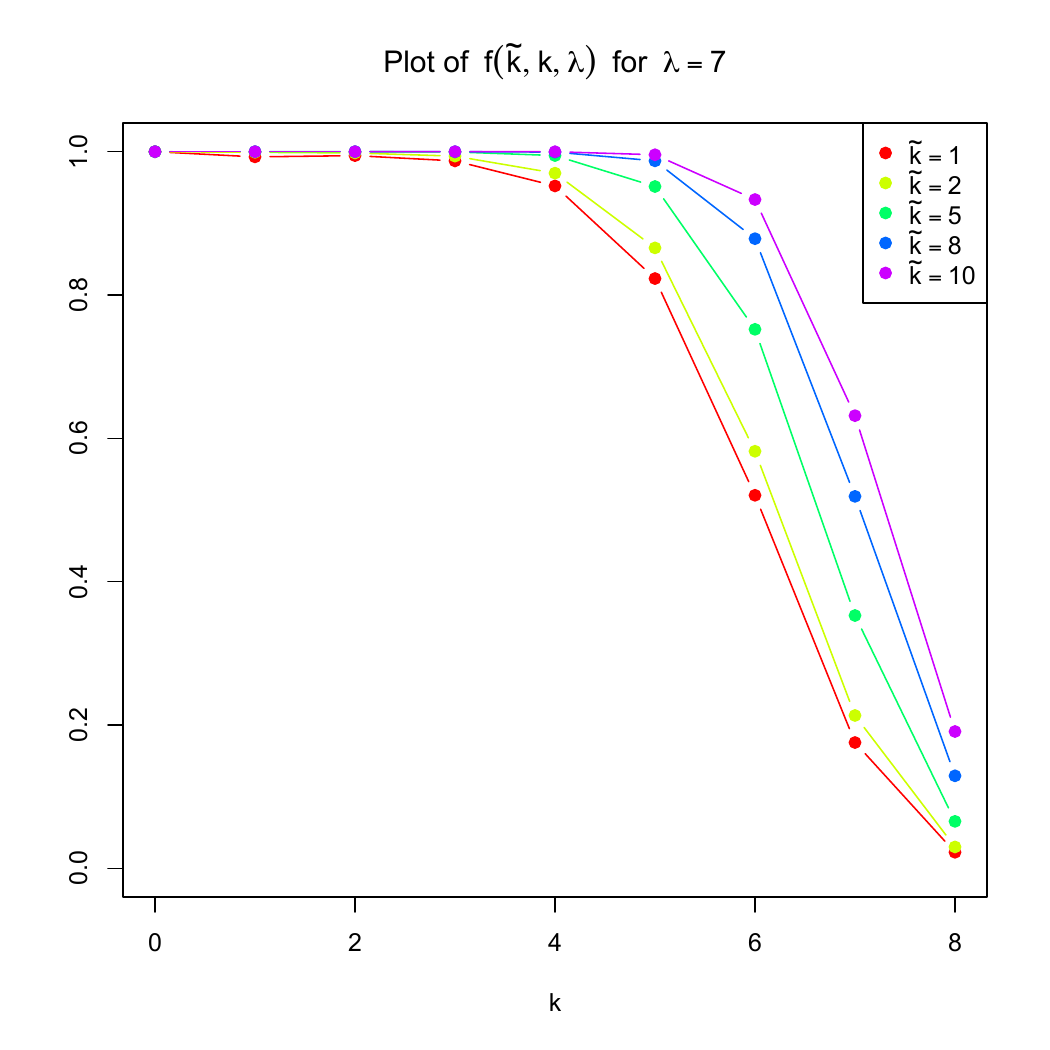}
\end{center}
\vspace{-0.4cm}
\caption{Plots of $k \mapsto f(\tilde{k}, k, \lambda) = \PP(\tilde{k} + S_k - k(k+1) > 0)$ for $\tilde{k} \in \{1,2,5,8,10\}$ and $\lambda \in\{1.5,2,3,7\}$. The function is non-increasing in $k$ for all displayed values, except for $\lambda=7$ and $\tilde{k}=1$, where a slight increase is observed from $k=1$ to $k=2$.}
\label{fig:sim2}
\end{figure}

We close with two examples where the correlation is negative regardless of the condition in \eqref{eq:mono}.

\begin{example}{\bf [Correlation in Galton-Watson trees]}
\label{AP-example}
{\rm
\begin{itemize}
\item[(a)] 
If $p_1\in (0,1)$, $p_a = 1-p_1$ for some $a \in \N\setminus\{1\}$, and $p_k = 0$ for all $k \notin \{1,a\}$, then $f^{++}<\tilde{f}^{+}f^{+}$.
\item[(b)] 
If $p_1\in (0,\frac{1}{2})$, $p_2\in (0,1-p_1)$, $p_a = 1-p_1-p_2\in (0,1)$ for some $a \in \N\setminus\{1,2,3,4\}$, and $p_k = 0$ for all $k \notin \{1,2,a\}$, then $f^{++}<\tilde{f}^{+}f^{+}$.
\hfill$\spadesuit$
\end{itemize}
}
\end{example}

Negative correlation is intuitively plausible, because the presence of a positive vertex typically makes it harder for the neighbouring vertices to also be positive. Negative correlation may have several combinatorial reasons, rather than being a consequence of one specific propagation mechanism.


\subsection{Discussion and open problems}
\label{sec:DiscPro}

\medskip\noindent
{\bf 1.} 
Theorem~\ref{thm:finite-trees-significant} gives a lower bound of the difference of the numbers of positive and negative vertices in an \emph{arbitrary finite tree}. It shows that this difference is at least 2, implying significance, and is bounded below by $2$ plus the total surplus of edges at branching points in the tree. The bound is sharp for instance when the tree consists of 6 vertices, of which 4 are leaves and 2 have degree 3. 

\medskip\noindent
{\bf 2.}
Theorem~\ref{thm:densities-on-infinite-trees} identifies the densities of positive, neutral and negative vertices in an \emph{infinite Galton-Watson tree} under mild conditions on the offspring distribution. Interestingly, the difference of the densities of positive and negative vertices is not always non-negative, and can have both signs depending on the choice of the offspring distribution. Theorem~\ref{thm:correlations-on-infinite-trees} identifies the densities of the edges having two given types of vertices at their ends. Theorem~\ref{thm:plusplus-correlations-on-infinite-trees} gives a sufficient condition under which the positive vertices are negatively correlated along edges, while Example~\ref{AP-example} shows that this condition is not necessary. It remains open to find an example   where the positive vertices are positively correlated along edges. 

\medskip\noindent
{\bf 3.} 
The formulas for the densities of the vertex-types and the edge-types are explicit and numerically easy to implement. They provide a \emph{quantification} of the friendship-bias in infinite Galton-Watson trees. Analytically they are not easy to manipulate, which is why a \emph{full classification} of significance remains an open problem. It would be interesting to see whether similar limiting results could be obtained for other random trees, e.g.\ those arising as local limits of configuration model random graphs and preferential attachment random graphs \cite[Chapters 7--8]{vdH1}.

\medskip\noindent
{\bf 4.} 
In \cite{HHP2} we looked at deeper levels of the friendship-bias, i.e., not only between vertices at distance 1 but at distance $k \in \N$. We found that different scenarios are possible when $n \to \infty$ and $k \to \infty$ jointly. It would be interesting to see what would happen on finite and infinite trees. 

\medskip\noindent
{\bf 5.} 
A tantalising question is the following. In the infinite Galton-Watson tree, for those cases where the positive vertices are negatively correlated, can we prove that they do not percolate? Or conversely, for those cases (if at all) where the positive vertices are positively correlated, can we prove that they do percolate? (Obviously, the answer must also depend on the offspring distribution of the tree.) Underlying these questions is the following: Can we establish a comparison with i.i.d.\ percolation on the same tree via coupling?


\subsection{Outline}
\label{sec:outline}

Theorems \ref{thm:finite-trees-significant}, \ref{thm:densities-on-infinite-trees} and \ref{thm:correlations-on-infinite-trees}, \ref{thm:plusplus-correlations-on-infinite-trees} are proved in Sections \ref{sec:finite-trees}--\ref{sec:infinite-trees}, respectively. The proof of Theorem \ref{thm:finite-trees-significant} is based on an exploration construction of the finite tree in which at every sweep at least as many positive vertices are covered as negative vertices. The proofs of Theorems \ref{thm:densities-on-infinite-trees} and \ref{thm:correlations-on-infinite-trees}, \ref{thm:plusplus-correlations-on-infinite-trees} exploit the randomness of the infinite Galton-Watson tree and the fact that the offsprings of its vertices are i.i.d., which allows for a recursion argument.  In Section~\ref{sec:infinite-trees} we explain Example~\ref{ex:GW-non-significant} and Example~\ref{AP-example} in more detail.


\section{Proofs for finite trees}
\label{sec:finite-trees}

In this section we look at arbitrary finite trees. Specifically, we prove Theorem \ref{thm:finite-trees-significant}. The proof uses an iterative construction of the tree through a sequence of growing subtrees for which the number of positive and negative vertices can be controlled. The construction is carried out in Sections~\ref{ss:bb}--\ref{ss:alg}, and is shown in Section \ref{sec:sgn} to imply significance.


\subsection{Building blocks: leaves and paths}
\label{ss:bb}

Before describing the construction, we state two lemmas that are used as building blocks of the construction. We omit the proofs because these follow from direct verification. 

\begin{lemma}
\label{lem1}
In any tree of size $\geq 3$, any leaf is positive.
\end{lemma}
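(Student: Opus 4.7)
The plan is to unpack the definition of the friendship-bias at a leaf and exploit the connectedness of the tree. Let $u$ be a leaf, so $d_u=1$, and let $w$ be the unique neighbour of $u$. Then the friendship-bias formula collapses to
\[
\Delta_{u,n} = \frac{1}{d_u}\sum_{v\sim u} d_v - d_u = d_w - 1,
\]
so showing $u$ is positive reduces to showing $d_w \geq 2$.

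The next step is to rule out $d_w = 1$ using the size hypothesis. If $d_w = 1$, then the only neighbour of $w$ is $u$ (because $u\sim w$ already accounts for $w$'s single incident edge). Since a tree is connected, every other vertex would have to reach $u$ or $w$ through a path, but any such path would have to enter $\{u,w\}$ via an edge incident to $u$ or $w$, contradicting $d_u=d_w=1$. Hence the only vertices of the tree are $u$ and $w$, violating $n\geq 3$.

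Therefore $d_w\geq 2$, which gives $\Delta_{u,n} = d_w - 1 \geq 1 > 0$, so $u$ is positive. I do not expect any genuine obstacle here; the statement is essentially a one-line observation once the formula for $\Delta_{u,n}$ is written out, and the size-$3$ hypothesis is precisely what prevents the degenerate two-vertex tree (in which each leaf is neutral rather than positive). The only point worth being explicit about in the write-up is why $d_w=1$ forces the whole tree to be the single edge $\{u,w\}$, which is immediate from connectedness of a tree.
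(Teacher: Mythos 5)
Your proof is correct and is exactly the direct verification the paper has in mind (the paper omits the proof of this lemma, stating that it follows from direct verification). The computation $\Delta_{u,n}=d_w-1$ together with the connectedness argument ruling out $d_w=1$ when $n\geq 3$ is complete and needs no further justification.
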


\begin{lemma}
\label{lem2}
Consider a path of $n$ vertices.
\begin{itemize}
\item[(a)] For $n \in \{1,2\}$, any vertex is neutral.
\item[(b)] For $n = 3$, two vertices (the end vertices) are positive  and one vertex (the middle vertex) is negative.
\item[(c)] For $n \geq 4$, two vertices (the end vertices) are positive, two vertices (the neighbours of the end vertices) are negative, and the rest $n-4$ vertices are neutral.
\end{itemize}
\end{lemma}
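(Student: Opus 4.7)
The plan is to compute the friendship-bias $\Delta_{u,n}$ directly for every vertex of the path, exploiting the fact that the degrees along a path are pinned down by position. I would label the vertices $v_1, \ldots, v_n$ in order, so that $v_i \sim v_{i+1}$ for $1 \le i \le n-1$. The degree sequence is then deterministic: $d_{v_1} = d_{v_n} = 1$ when $n \ge 2$ (with the degenerate case $d_{v_1} = 0$ when $n = 1$), while every internal vertex satisfies $d_{v_i} = 2$.

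For part (a), I would dispatch $n = 1$ via the empty-sum convention in the definition of $\Delta_{u,n}$, which forces $\Delta_{v_1,1} = 0$, and $n = 2$ by noting that each endpoint sees a single neighbor of its own degree, again giving $\Delta = 0$. For part (b) with $n = 3$, the two ends see a neighbor of degree $2$ while themselves having degree $1$, whereas the middle vertex sees two neighbors of degree $1$ while itself having degree $2$, yielding $\Delta = 1$ and $\Delta = -1$ respectively. For part (c) with $n \ge 4$, I would partition the vertex set into three positional classes---the ends $\{v_1, v_n\}$, the near-ends $\{v_2, v_{n-1}\}$, and the deep interior $\{v_3, \ldots, v_{n-2}\}$---and evaluate $\Delta$ for one representative per class: the ends give $\Delta = 2 - 1 = 1 > 0$, the near-ends give $\Delta = \tfrac{1}{2}(1+2) - 2 = -\tfrac{1}{2} < 0$, and any deep-interior vertex gives $\Delta = \tfrac{1}{2}(2+2) - 2 = 0$. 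This accounts for exactly two positive, two negative, and $n-4$ neutral vertices, matching the claim.

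Since every case collapses to a one-line arithmetic check, there is no real technical obstacle. The only points of care are that the three positional classes in (c) are genuinely disjoint precisely when $n \ge 4$ (which is why (b) must be singled out), and that the empty-sum convention must be invoked for the isolated root in the $n = 1$ subcase of (a). This is presumably why the authors defer the proof to \emph{direct verification}.
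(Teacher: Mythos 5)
Your proposal is correct and is exactly the direct verification the paper has in mind (the authors explicitly omit the proof as ``direct verification''): fix the degree sequence of the path, compute $\Delta$ for each positional class, and read off the signs. All the arithmetic checks out, including the careful handling of the $n=1$ empty-sum convention and the observation that the three classes in (c) are disjoint only for $n\geq 4$.
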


\noindent
Lemma~\ref{lem2} settles Theorem~\ref{thm:finite-trees-significant}(b). In the next section we prove Theorem~\ref{thm:finite-trees-significant}(a) via an exploration construction of $\Tt_n$.


\subsection{Exploration construction for finite trees}
\label{ss:alg}

We start with the definition of a branching point.

\begin{definition}{\bf [Branching point]}
\label{def:branching-point}
{\rm Vertex $u\in V(\Tt_n)$ is a branching point when $d_u \geq 3$.} 
\end{definition}

By assumption, $\Tt_n$ contains at least one branching point, which can happen only if $n\ge 4$. Suppose that $\mathcal{T}_n = (V(\mathcal{T}_n),E(\mathcal{T}_n))$ is an arbitrary tree with $n \geq 4$ vertices.  Since we may start exploring from any vertex, we choose to start from a branching point $v$, and we set $\phi =v$ to be the root of the tree. In order to prove Theorem \ref{thm:finite-trees-significant}(a), we construct $(\mathcal{T}_n,\phi)$ step by step as follows. 

Starting from the root $\phi$, we successively add vertices in $m\in\N$ steps to obtain a sequence of subtrees 
\[
\mathcal{T}_{n_1} \subseteq \cdots \subseteq \mathcal{T}_{n_m}
\]
with
\[
\mathcal{T}_{n_1} = \mathcal{T}_1 = (\{\phi\},\emptyset)\quad \text{and} \quad \mathcal{T}_{n_m} = \mathcal{T}_{n},
\]
such that at each step $j$ the number of negative vertices added does not exceed the number of positive vertices added. At each step $j \geq 2$, we construct $\Tt_{n_j}$ by adding $n_j - n_{j-1}$ vertices to $\Tt_{n_{j-1}}$, along with all the edges connecting these new vertices to the vertices already present in $\Tt_{n_{j-1}}$, as well as any edges that might exist between the new vertices themselves. Without loss of generality, we can number the vertices in the order in which they are added, and in the Ulam-Harris ordering mentioned in Section \ref{sec:AP-not}. The construction ends when $\mathcal{T}_{n_m}$ with $n_m=n$ is reached.

\medskip\noindent
\underline{\textit{Step 1:} Construct $\mathcal{T}_{n_1} = (\{\phi\},\emptyset)$.} Here, $N_{n_1}^+ = N_{n_1}^- = 0$ and $N_{n_1}^0 = 1$.

\medskip\noindent
\underline{\textit{Step 2:} Construct $\mathcal{T}_{n_2}$} by adding all $d_\phi$ offspring of $\phi$ together with their edges to $\phi$ (see Figure \ref{fig1}). In $\Tt_{n_2}$, the root $\phi$ is negative, and all other vertices are leaves and therefore are positive: 
\begin{align*}
&N_{n_2}^+=N_{n_1}^+ +d_\phi,\\
&N_{n_2}^-=N_{n_1}^-+1,\\
&N_{n_2}^+-N_{n_2}^-=d_\phi-1=1+(d_\phi-2).
\end{align*}

\begin{figure}[htbp]
\centering
\scalebox{1.3}  
{
\begin{tikzpicture}
\definecolor{colour0}{rgb}{0.8019608,0.8019608,0.8019608}
\draw[thick] (1.67,-0.63) -- (0.87,-1.03);
\draw[thick] (1.67,-0.63) -- (1.67,-1.03);
\draw[thick] (1.67,-0.63) -- (1.27,-1.03);
\draw[thick] (1.67,-0.63) -- (2.07,-1.03);
\draw[thick] (1.67,-0.63) -- (2.47,-1.03);
\draw[thick,colour0] (1.27,-1.03) -- (1.67,-1.43);
\draw[thick,colour0] (1.27,-1.03) -- (0.87,-1.43);
\draw[thick,colour0] (1.27,-1.03) -- (1.27,-1.43);
\draw[thick,colour0] (0.87,-1.43) -- (1.27,-1.83);
\draw[thick,colour0] (0.87,-1.43) -- (0.47,-1.83);
\draw[thick,colour0] (1.67,-1.43) -- (1.67,-1.83);
\draw[thick,colour0] (0.47,-1.83) -- (0.47,-2.63);
\draw[thick,colour0] (1.67,-1.83) -- (1.27,-2.23);
\draw[thick,colour0] (1.67,-1.83) -- (2.07,-2.23);
\draw[thick,colour0] (2.07,-2.23) -- (2.47,-2.63);
\draw[thick,colour0] (2.07,-2.23) -- (1.67,-2.63);
\draw[thick,colour0] (0.47,-2.63) -- (0.87,-3.03);
\draw[thick,colour0] (0.47,-2.63) -- (0.07,-3.03);
\draw[thick,colour0] (1.67,-2.63) -- (2.07,-3.03);
\draw[thick,colour0] (1.67,-2.63) -- (1.67,-3.03);
\draw[thick,colour0] (1.67,-2.63) -- (1.27,-3.03);
\filldraw (1.67,-0.63) circle (1.4pt);
\filldraw (1.27,-1.03) circle (1.4pt);
\filldraw (1.67,-1.03) circle (1.4pt);
\filldraw (2.07,-1.03) circle (1.4pt);
\filldraw (2.47,-1.03) circle (1.4pt);
\filldraw (0.87,-1.03) circle (1.4pt);
\filldraw[fill=colour0] (1.27,-1.43) circle (1.4pt);
\filldraw[fill=colour0] (1.27,-1.83) circle (1.4pt);
\filldraw[fill=colour0] (1.27,-2.23) circle (1.4pt);
\filldraw[fill=colour0] (1.67,-1.43) circle (1.4pt);
\filldraw[fill=colour0] (1.67,-1.83) circle (1.4pt);
\filldraw[fill=colour0] (2.07,-2.23) circle (1.4pt);
\filldraw[fill=colour0] (1.67,-2.63) circle (1.4pt);
\filldraw[fill=colour0] (2.47,-2.63) circle (1.4pt);
\filldraw[fill=colour0] (1.67,-3.03) circle (1.4pt);
\filldraw[fill=colour0] (0.87,-1.43) circle (1.4pt);
\filldraw[fill=colour0] (0.47,-1.83) circle (1.4pt);
\filldraw[fill=colour0] (0.47,-2.23) circle (1.4pt);
\filldraw[fill=colour0] (0.47,-2.63) circle (1.4pt);
\filldraw[fill=colour0] (0.87,-3.03) circle (1.4pt);
\filldraw[fill=colour0] (0.07,-3.03) circle (1.4pt);
\filldraw[fill=colour0] (1.27,-3.03) circle (1.4pt);
\filldraw[fill=colour0] (2.07,-3.03) circle (1.4pt);
\node at (1.68,-0.32) {{\small $\phi$}};
\node at (1.27,-4.23) {\small (a)};
\draw[thick, colour0] (7.27,-0.63) -- (6.87,-1.03);
\draw[thick, colour0] (6.87,-1.03) -- (6.47,-1.43);
\draw[thick, colour0] (6.47,-1.43) -- (6.07,-1.83);
\draw[thick, colour0] (7.27,-0.63) -- (7.67,-1.03);
\draw[thick, colour0] (7.67,-1.03) -- (8.07,-1.43);
\draw[thick, colour0](5.27,-2.63) -- (5.67,-3.03);
\draw[thick, colour0] (5.27,-2.63) -- (4.87,-3.03);
\draw[thick] (6.07,-1.83) -- (5.67,-2.23);
\draw[thick] (5.67,-2.23) -- (6.07,-2.63);
\draw[thick] (5.67,-2.23) -- (5.27,-2.63);
\filldraw[fill=colour0] (7.27,-0.63) circle (1.4pt);
\filldraw[fill=colour0] (6.87,-1.03) circle (1.4pt);
\filldraw[fill=colour0] (6.47,-1.43) circle (1.4pt);
\filldraw[fill=colour0] (7.67,-1.03) circle (1.4pt);
\filldraw[fill=colour0] (8.07,-1.43) circle (1.4pt);
\filldraw[fill=colour0] (5.67,-3.03) circle (1.4pt);
\filldraw[fill=colour0] (4.87,-3.03) circle (1.4pt);
\filldraw (6.07,-1.83) circle (1.4pt);
\filldraw (5.67,-2.23) circle (1.4pt);
\filldraw (6.07,-2.63) circle (1.4pt);
\filldraw (5.27,-2.63) circle (1.4pt);
\node at (5.47,-2.03) {{\small $\phi$}};
\node at (6.87,-4.23) {\small (b)};
\end{tikzpicture}
}
\caption{Indicated in black is the subtree $\mathcal{T}_{n_2}$ of the full tree $\mathcal{T}_n$.}
\label{fig1}
\end{figure}
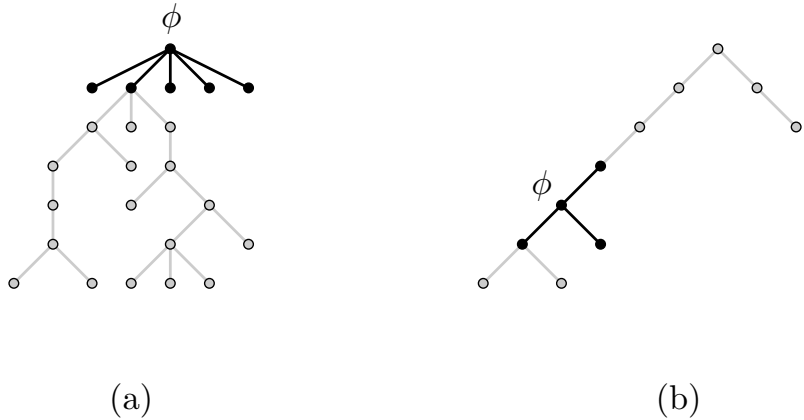

\medskip\noindent 
$\bullet$ Let $j>2$. If $n_{j-1}=n$, then Theorem~\ref{thm:finite-trees-significant}(a) is proved. Otherwise, go to the next step.

\medskip\noindent
\underline{\textit{Step $j$:} Construct $\mathcal{T}_{n_{j}}$.} We have $j>2$, and $\mathcal{T}_{n_{j-1}}$ has been constructed. Take any leaf $u\in V(\mathcal{T}_{n_{j-1}})$ with a degree larger than $1$ in $\mathcal{T}_n$ (like $\phi_1, \phi_2, \phi_3$ and $\phi_5$ in Figure \ref{fig2}).  There are two possible cases:
\begin{itemize}
\item[{\bf Case 1.}] If $u$ is a branching point of $\mathcal{T}_n$ (as $\phi_1$ in Figure \ref{fig2}), then add all its $d_u-1$ offspring to the construction together with their edges to $u$, which become leaves (and thus are positive) in $\Tt_{n_j}$. The vertex $u$ itself may either stay positive or turn into neutral or negative, depending on $d_u$ and on the degree of the parent of $u$. Thus, we can update the number of positive and negative vertices as follows:  
\begin{align*}
N_{n_j}^+ &= N_{n_{j-1}}^+ + d_u - 1 - \ind{\mbox{$u$ is neutral or negative in $\Tt_{n_j}$}},\\
N_{n_j}^- &= N_{n_{j-1}}^-+ \ind{\mbox{$u$ is negative in $\Tt_{n_j}$}},\\
N_{n_j}^+ - N_{n_j}^- &= N_{n_{j-1}}^+ - N_{n_{j-1}}^- + d_u - 1\\ 
&\qquad - \ind{\mbox{$u$ is neutral in $\Tt_{n_j}$}} - \ind{\mbox{$u$ is neutral or negative in $\Tt_{n_j}$}}\\ 
& \ge N_{n_{j-1}}^+ - N_{n_{j-1}}^- + (d_u-3).
\end{align*}
\item[{\bf Case 2.}] If $d_u=2$  (as $\phi_2,\phi_3$ and $\phi_5$ in Figure \ref{fig2}), then continue by adding its descendant(s) and relevant edges to $\Tt_{n_{j-1}}$ until reaching a leaf or a branching point of $\Tt_n$. Suppose that we have added $k$ vertices. Importantly, note that the parent of $u$ is a branching point. Indeed, if it would not, then it would fall under the current Case 2, so we would have continued adding vertices after $u$. Thus, since $d_u=2$, vertex $u$ cannot become negative in $\Tt_{n_j}$. It can become neutral when $k=1$ and the degree of the parent of $u$ is $3$. Furthermore, observe that the $k$-th vertex is positive in $\Tt_{n_j}$ because it is a leaf, and if $k \geq 2$, then the $(k-1)$-th vertex is negative in $\Tt_{n_j}$ because it has degree 2 and its neighbours have degrees 2 and 1. Hence, we get
\begin{align*}
N_{n_j}^+ &= N_{n_{j-1}}^+ + 2 - \ind{\mbox{$k=1$, parent of $u$ has degree 3 in $\Tt_{n_j}$}},\\
N_{n_j}^- &= N_{n_{j-1}}^- + \ind{\mbox{$k\ge 2$}},\\
N_{n_j}^+ - N_{n_j}^- &\geq  N_{n_{j-1}}^+ - N_{n_{j-1}}^-.
\end{align*}
\end{itemize}

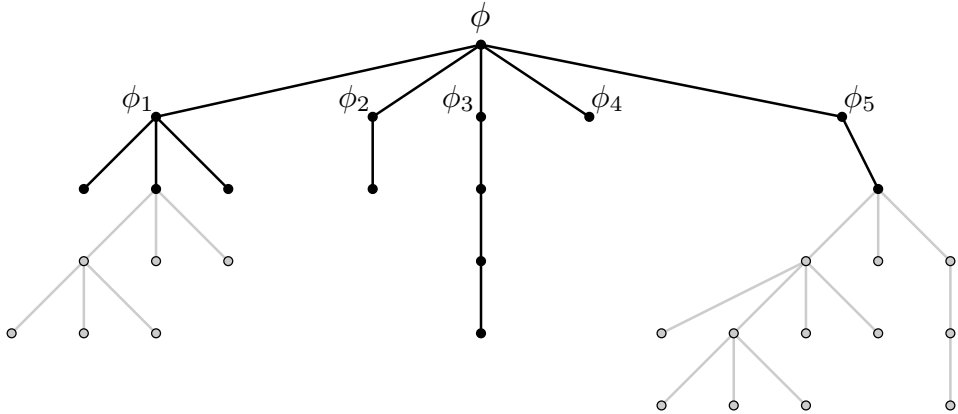
\begin{figure}[htbp]
\centering
\scalebox{1.2}  
{
\begin{tikzpicture}
\definecolor{colour0}{rgb}{0.8019608,0.8019608,0.8019608}
\draw[thick] (6.05,-1.65) -- (7.25,-2.45);
\draw[thick] (6.05,-1.65) -- (6.05,-2.45);
\draw[thick] (6.05,-1.65) -- (4.85,-2.45);
\draw[thick] (6.05,-1.65) -- (2.45,-2.45);
\draw[thick] (6.05,-1.65) -- (10.05,-2.45);
\draw[thick] (6.05,-2.45) -- (6.05,-4.85);
\draw[thick] (4.85,-2.45) -- (4.85,-3.25);
\draw[thick] (2.45,-2.45) -- (1.65,-3.25);
\draw[thick] (2.45,-2.45) -- (2.45,-3.25);
\draw[thick] (2.45,-2.45) -- (3.25,-3.25);
\draw[thick] (10.05,-2.45) -- (10.45,-3.25);
\draw[thick,colour0] (10.45,-3.25) -- (9.65,-4.05);
\draw[thick,colour0] (10.45,-3.25) -- (10.45,-4.05);
\draw[thick,colour0] (10.45,-3.25) -- (11.25,-4.05);
\draw [thick,colour0] (9.65,-4.05) -- (8.85,-4.85);
\draw [thick,colour0] (9.65,-4.05) -- (9.65,-4.85);
\draw [thick,colour0] (9.65,-4.05) -- (10.45,-4.85);
\draw [thick,colour0] (9.65,-4.05) -- (8.05,-4.85);
\draw [thick,colour0] (8.85,-4.85) -- (9.65,-5.65);
\draw [thick,colour0] (8.85,-4.85) -- (8.85,-5.65);
\draw [thick,colour0] (8.85,-4.85) -- (8.05,-5.65);
\draw [thick,colour0] (11.25,-4.05) -- (11.25,-5.65);
\draw [thick,colour0] (2.45,-3.25) -- (1.65,-4.05);
\draw [thick,colour0] (2.45,-3.25) -- (2.45,-4.05);
\draw [thick,colour0] (2.45,-3.25) -- (3.25,-4.05);
\draw [thick,colour0] (1.65,-4.05) -- (2.45,-4.85);
\draw [thick,colour0] (1.65,-4.05) -- (1.65,-4.85);
\draw [thick,colour0] (1.65,-4.05) -- (0.85,-4.85);
\filldraw (6.05,-1.65) circle (1.4pt);
\filldraw (4.85,-2.45) circle (1.4pt);
\filldraw (6.05,-2.45) circle (1.4pt);
\filldraw (7.25,-2.45) circle (1.4pt);
\filldraw (6.05,-3.25) circle (1.4pt);
\filldraw (6.05,-4.05) circle (1.4pt);
\filldraw (6.05,-4.85) circle (1.4pt);
\filldraw (4.85,-3.25) circle (1.4pt);
\filldraw (3.25,-3.25) circle (1.4pt);
\filldraw (2.45,-3.25) circle (1.4pt);
\filldraw (1.65,-3.25) circle (1.4pt);
\filldraw (2.45,-2.45) circle (1.4pt);
\filldraw (10.05,-2.45) circle (1.4pt);
\filldraw (10.45,-3.25) circle (1.4pt);
\filldraw [fill=colour0] (9.65,-4.05) circle (1.4pt);
\filldraw [fill=colour0] (10.45,-4.05) circle (1.4pt);
\filldraw [fill=colour0] (11.25,-4.05) circle (1.4pt);
\filldraw [fill=colour0] (8.05,-4.85) circle (1.4pt);
\filldraw [fill=colour0] (8.85,-4.85) circle (1.4pt);
\filldraw [fill=colour0] (9.65,-4.85) circle (1.4pt);
\filldraw [fill=colour0] (10.45,-4.85) circle (1.4pt);
\filldraw [fill=colour0] (9.65,-5.65) circle (1.4pt);
\filldraw [fill=colour0] (8.85,-5.65) circle (1.4pt);
\filldraw [fill=colour0] (8.05,-5.65) circle (1.4pt);
\filldraw [fill=colour0] (11.25,-4.85) circle (1.4pt);
\filldraw [fill=colour0] (11.25,-5.65) circle (1.4pt);
\filldraw [fill=colour0] (3.25,-4.05) circle (1.4pt);
\filldraw [fill=colour0] (2.45,-4.05) circle (1.4pt);
\filldraw [fill=colour0] (1.65,-4.05) circle (1.4pt);
\filldraw [fill=colour0] (1.65,-4.85) circle (1.4pt);
\filldraw [fill=colour0] (2.45,-4.85) circle (1.4pt);
\filldraw [fill=colour0] (0.85,-4.85) circle (1.4pt);
\node at (6.04,-1.35) {$\phi$};
\node at (2.25,-2.25) {\small $\phi_1$};
\node at (4.65,-2.25) {\small $\phi_2$};
\node at (5.8,-2.25) {\small $\phi_3$};
\node at (7.45,-2.25) {\small $\phi_4$};
\node at (10.25,-2.25) {\small $\phi_5$};
\end{tikzpicture}
}
\caption{Indicated in black is the subtree $\mathcal{T}_{n_3}$ of the full tree $\mathcal{T}_n$.}
\label{fig2}
\end{figure}


\subsection{All finite trees are significant}
\label{sec:sgn}

The above iterative construction has the property that the number of positive vertices is at least as large as the number of negative vertices, at every step $j\geq 1$. Moreover, by construction, looking at the increments of $N_{n_j}^+ - N_{n_j}^-$ when we add the offspring of branching point $u$ (Step 2 and Case 1 in steps $j>2$), we get
\begin{align*}
N_n^+ - N_n^{-}
&= \sum_{j=2}^{m} \left([N_{n_j}^+ - N_{n_j}^-] - [N_{n_{j-1}}^+ - N_{n_{j-1}}^-]\right)\\
&=1 + (d_\phi - 2) + \sum_{j=3}^{m}\left([N_{n_j}^+ - N_{n_j}^-] - [N_{n_{j-1}}^+ - N_{n_{j-1}}^-]\right)
\geq 2 + \sum_{ {u\in V(\Tt_n):} \atop {d_u \geq 3} }(d_u-3),
\end{align*}
where we use that $d_\phi \geq 3$ because the root $\phi$ was chosen to be a branching point. This completes the proof of Theorem \ref{thm:finite-trees-significant}(a).


\section{Proofs for infinite trees}
\label{sec:infinite-trees}

In this section we turn to infinite trees, in particular, \emph{random} infinite trees. In Section~\ref{ss:infGW} we prove Theorem \ref{thm:densities-on-infinite-trees}, while in Section \ref{ss:Ex} we elaborate on Example \ref{ex:GW-non-significant}. In Section \ref{sec:correlations-proof} we prove Theorems~\ref{thm:correlations-on-infinite-trees} and \ref{thm:plusplus-correlations-on-infinite-trees}, while in Section \ref{ap:Ex} we elaborate on Example \ref{AP-example}.


\subsection{Densities of vertex types in infinite Galton-Watson tree}
\label{ss:infGW}

\begin{proof}[Proof of Theorem \ref{thm:densities-on-infinite-trees}]
For $\chi \in S$, we show that
\begin{equation}
\label{eq:limit-tree}
\frac{N^{\chi}_m}{N_m} \stackrel{\PP-\mathrm{a.s.}}{\longrightarrow} f^\chi, \qquad m\to\infty,
\end{equation}
with $f^\chi$ as in Theorem \ref{thm:densities-on-infinite-trees}(b).

\medskip\noindent
{\bf 1.}
For $m \in \N$, let $Z_m = N_m-N_{m-1} = |\partial \Tt_\infty^m|$ denote the number of vertices in generation $m$. Then $(Z_m)_{m \in \N_0}$ is a Galton-Watson process with offspring distribution $p$ and initial value $Z_0=|\partial \Tt_{\infty}^0|=1$. We have $\EE[Z_m] = \mu^m$. Moreover, because of the conditions posed in \eqref{eq:pcond}, the Kesten-Stigum theorem \cite{KS} says that 
\begin{equation}
\label{eq:KSthm*}
\frac{Z_m}{\mu^m} \stackrel{\PP-\mathrm{a.s.}}{\longrightarrow} W_\infty, \qquad m\to\infty,
\end{equation}
where $W_\infty$ is a random variable satisfying $\EE[W_\infty]=1$ and $\PP(W_\infty \in (0,\infty))=1$.

\medskip\noindent
{\bf 2.}
Fix $\chi \in S$. For any vertex $v\in V(\Tt_\infty)$, let $X_v^\chi$ be the number of offspring of $v$ of type $\chi$. Then  we obtain 
\begin{align*}
\EE[X_v^\chi] &= \EE\left[\sum_{i=1}^{X_v}\ind{{\rm sign}\left[X_v+X_{vi1}+X_{vi2}
+\cdots+X_{viX_{vi}}-X_{vi}(X_{vi}+1)\right]=\chi}\right]\\
&= \sum_{\tk\in\N_0} p_{\tk} \sum_{i=1}^{\tk}\EE\left[\ind{{\rm sign}
\left[\tk+X_{vi1}+X_{vi2}+\cdots+X_{viX_{vi}}-X_{vi}(X_{vi}+1)\right]
=\chi}\right]\quad\tag{\scriptsize condition on $X_v=\tk$}\\
&= \sum_{\tk\in\N_0} p_{\tk}\,\tk\,  \EE\left[\ind{{\rm sign}\left[\tk+X_{v11}+X_{v12}
+\cdots+X_{v1X_{v1}}-X_{v1}(X_{v1}+1)\right]=\chi}\right]\quad\tag{\scriptsize $X_{vi}$'s $\PP$-identical to $X_{v1}$}\\
&=\mu \sum_{\tk\in\N_0} \tilde{p}_{\tk}\sum_{k\in\N_0} p_k\, \EE\left[\ind{{\rm sign}
\left[\tk+X_{v11}+X_{v12}+\cdots+X_{v1k}-k(k+1)\right]=\chi}\right]\quad\tag{\scriptsize condition on $X_{v1}=k$}\\
&= \mu\sum_{k\in\N_0} p_k\, \PP\left({\rm sign}\left[\tilde{X} + S_k-k(k+1)\right]
=\chi\right)\quad\tag{\scriptsize full probability formula for $\tX$}\\
&= \mu f^\chi,
\end{align*}
and
\[
\EE[X_v] = \mu.
\]
Let $\mathcal{F}_{m-1}$ be the sigma-field generated by the tree up to generation $m-1$. Write
\[
\partial \mathcal{T}_{\infty}^{m-1} = \{v_1^{(m)} , \ldots , v_{Z_{m-1}}^{(m)}\},
\]
where the vertices are labelled according to the Ulam-Harris ordering. For $1\leq i\leq Z_{m-1}$, let $Y_{m,i}^{\chi}=X_{v_{i}^{(m)}}^\chi$. Define
\[
Z_m^\chi = N_m^\chi-N_{m-1}^\chi 
\]
and note that
\[
Z_m^\chi = \sum_{i=1}^{Z_{m-1}} Y_{m,i}^{\chi}.
\]

\medskip\noindent
{\bf 3.}
We next show the following strong law of large numbers, which is of independent interest.

\begin{lemma}
\label{lem:randomindexSLLN}
$\frac{Z_m^\chi}{Z_{m-1}} \stackrel{\PP-\mathrm{a.s.}}{\longrightarrow} \mu f^\chi$, $m\to \infty$.
\end{lemma}

\begin{proof}
Conditional on $\mathcal{F}_{m-1}$, the random variables $Y_{m,1}^{\chi},\ldots,Y_{m,Z_{m-1}}^{\chi}$ are i.i.d., their common distribution does not depend on $m$, and 
\[
\EE[Y_{m,1}^{\chi}]=\mu f^{\chi} < \infty .
\]
For $a>0$, define
\[
\widetilde{Y}_{m,i}^{\chi} = Y_{m,i}^{\chi}\, \mathbbm{1}_{\{Y_{m,i}^{\chi}\leq a Z_{m-1}\}}, \qquad 1\leq i\leq Z_{m-1}.
\]
Then
\[
\frac{Z_m^{\chi}}{Z_{m-1}}-\mu f^{\chi} = A_m + B_m + D_m,
\]
where
\[
\begin{aligned}
A_m &= \frac1{Z_{m-1}} \sum_{i=1}^{Z_{m-1}} \Big( \widetilde{Y}_{m,i}^{\chi} - \EE[\widetilde{Y}_{m,i}^{\chi}\mid\mathcal{F}_{m-1}] \Big),\\
B_m &= \frac{1}{Z_{m-1}} \sum_{i=1}^{Z_{m-1}} Y_{m,i}^{\chi}\, \mathbbm{1}_{\{Y_{m,i}^{\chi}>aZ_{m-1}\}},\\
D_m &= \EE[\widetilde{Y}_{m,1}^{\chi}\mid\mathcal{F}_{m-1}] -\mu f^{\chi}.
\end{aligned}
\]
Hence, for any $\varepsilon > 0$,
\[
\begin{aligned}
&\PP\left( \Big| \frac{Z_m^\chi}{Z_{m-1}}-\mu f^\chi \Big| >\varepsilon \,\middle|\, \mathcal{F}_{m-1} \right)\\
&\qquad \leq \PP\left( |A_m|>\tfrac{\varepsilon}{3} \,\middle|\, \mathcal{F}_{m-1} \right) 
+ \PP\left( B_m >\tfrac{\varepsilon}{3} \,\middle|\, \mathcal{F}_{m-1} \right) 
+ \mathbbm{1}_{\{|D_m|>\tfrac{\varepsilon}{3}\}} \qquad \mathrm{a.s.}
\end{aligned}
\]
We treat the three terms separately.

\medskip\noindent
(1) By Chebyshev's inequality and conditional independence,
\[
\begin{aligned}
&\PP\left( |A_m|>\tfrac{\varepsilon}{3} \,\middle|\, \mathcal{F}_{m-1} \right)
\leq \frac{9}{\varepsilon^2}\, \EE\left[ A_m^2 \,\middle|\, \mathcal{F}_{m-1} \right]
= \frac{9}{\varepsilon^2 Z_{m-1}}\, \mathbb{V}{\rm ar}\left( \widetilde{Y}_{m,1}^{\chi} \mid \mathcal F_{m-1} \right)\\
&\leq \frac{9}{\varepsilon^2 Z_{m-1}}\, \EE\left[ (\widetilde{Y}_{m,1}^{\chi})^2 \,\middle|\, \mathcal{F}_{m-1} \right]
= \frac{9}{\varepsilon^2 Z_{m-1}}\, \EE\left[(Y_{m,1}^{\chi})^2\, \mathbbm{1}_{\{Y_{m,1}^{\chi}\leq aZ_{m-1}\}} 
\,\middle|\, \mathcal{F}_{m-1} \right] \qquad \mathrm{a.s.}
\end{aligned}
\]
For $k,m\in\N$, define
\[
\Omega_{m}(k) = \left\{\omega\in \Omega\colon\, k^{-1}\mu^{m-1}\leq Z_{m-1}(\omega)\leq k\mu^{m-1} \right\}.
\]
Then, since $\Omega_m(k)\in\mathcal F_{m-1}$,
\[
\begin{aligned}
\PP\left( \left\{|A_m|>\tfrac{\varepsilon}{3}\right\}\cap \Omega_{m}(k) \right)
&
=\EE\Big[ \mathbbm{1}_{\Omega_{m}}(k)\, \PP\left(|A_m|>\tfrac{\varepsilon}{3} \,\middle|\, \mathcal{F}_{m-1} \right) \Big]
\\&
\leq \frac{9}{\varepsilon^2}\, \EE\left[ \mathbbm{1}_{\Omega_m(k)}\frac{1}{Z_{m-1}}\, \EE\left[ (Y_{m,1}^{\chi})^2\, \mathbbm{1}_{\{Y_{m,1}^{\chi}\leq aZ_{m-1}\}}
\,\middle|\, \mathcal{F}_{m-1} \right] \right]
\\&
\leq \frac{9k}{\varepsilon^2}\,\mu^{-(m-1)}\, \EE\left[ (Y_{1,1}^{\chi})^2\, \mathbbm{1}_{\{Y_{1,1}^{\chi}\leq ak\mu^{m-1}\}}
\right].
\end{aligned}
\]
Hence
\[
\begin{aligned}
\sum_{m\in\N} \PP\left( \{|A_m|>\tfrac{\varepsilon}{3}\}\cap\Omega_m(k) \right)
&
\leq \frac{9k}{\varepsilon^2} \sum_{m\in\N} \mu^{-(m-1)}\, \EE\Big[ (Y_{1,1}^{\chi})^2\, \mathbbm{1}_{\{Y_{1,1}^{\chi}\le ak\mu^{m-1}\}} \Big]
\\&
= \frac{9k}{\varepsilon^2}\, \EE\Big[ (Y_{1,1}^{\chi})^2 \sum_{m\in\N} \mu^{-(m-1)}\,\mathbbm{1}_{\{Y_{1,1}^{\chi}\le ak\mu^{m-1}\}} \Big],
\end{aligned}
\]
where the last equality follows from Tonelli's theorem. Now, for every $y\in\N$, if
\[
y\leq ak\mu^{m-1},
\]
then
\[
m-1\geq \log_{\mu}\Big(\frac{y}{ak}\Big).
\]
Hence
\[
\begin{aligned}
\sum_{m\in\N} \PP\left( \{|A_m|>\tfrac{\varepsilon}{3}\}\cap\Omega_m(k) \right)
&
\leq \frac{9k}{\varepsilon^2}\,\EE\bigg[ (Y_{1,1}^{\chi})^2\,\mathbbm{1}_{\{Y_{1,1}^{\chi}>0\}}\, \sum_{m\geq \lceil\log_\mu(Y_{1,1}^{\chi}/(ak))\rceil+1} \mu^{-(m-1)}
\bigg]
\\&
\leq \frac{9k}{\varepsilon^2}\, \EE\bigg[ (Y_{1,1}^{\chi})^2\,\mathbbm{1}_{\{Y_{1,1}^{\chi}>0\}}\,  \mu^{-\log_{\mu}(Y_{1,1}^{\chi}/(ak))}\,(1-\mu^{-1})^{-1} \bigg]
\\&
\leq \frac{9ak^2}{\varepsilon^2(1-\mu^{-1})}\, \EE[Y_{1,1}^{\chi}] <\infty.
\end{aligned}
\]

\medskip\noindent
(2) Since the event $\{B_m>\tfrac{\varepsilon}{3}\}$ implies that at least one of the variables $Y_{m,1}^{\chi},\ldots,Y_{m,Z_{m-1}}^{\chi}$ is larger than $aZ_{m-1}$, we have, by the union bound,
\[
\PP\left( B_m>\tfrac{\varepsilon}{3} \,\middle|\, \mathcal{F}_{m-1}\right)
\leq Z_{m-1}\PP\left( Y_{m,1}^{\chi}>aZ_{m-1} \,\middle|\, \mathcal{F}_{m-1}\right)  \qquad \mathrm{a.s.}
\]
Hence, since $\Omega_m(k)\in\mathcal F_{m-1}$,
\[
\begin{aligned}
\PP\left(\{B_m>\tfrac{\varepsilon}{3}\}\cap\Omega_m(k)\right)
&
= \EE\left[ \mathbbm{1}_{\Omega_m(k)}\,\PP\left( B_m>\tfrac{\varepsilon}{3} \,\middle|\, \mathcal{F}_{m-1} \right) \right]
\\&
\leq \EE\left[ \mathbbm{1}_{\Omega_m(k)}\, Z_{m-1}\, \PP\left( Y_{m,1}^{\chi} > aZ_{m-1} \,\middle|\, \mathcal{F}_{m-1} \right) \right]
\\&
\leq k\mu^{m-1}\,\PP\left( Y_{1,1}^{\chi} > ak^{-1}\mu^{m-1} \right).
\end{aligned}
\]
Consequently,
\[
\begin{aligned}
\sum_{m\in\N} \PP\left( \{B_m>\tfrac{\varepsilon}{3}\}\cap\Omega_m(k) \right)
&
\leq k\, \sum_{m\in\N} \mu^{m-1}\, \PP\left( Y_{1,1}^{\chi} > ak^{-1}\mu^{m-1} \right)
\\&
= k\, \EE\bigg[ \sum_{m\in\N} \mu^{m-1}\, \mathbbm{1}_{\{Y_{1,1}^{\chi}>ak^{-1}\mu^{m-1}\}} \bigg],
\end{aligned}
\]
where the last equality follows from Tonelli's theorem. 
Now, for every $y\in\N$, if
\[
y > ak^{-1}\mu^{m-1},
\]
then
\[
m-1< \log_\mu\Big(\frac{yk}{a}\Big).
\]
Hence
\[
\begin{aligned}
\sum_{m\in\N}\PP\left(\{B_m>\tfrac{\varepsilon}{3}\}\cap\Omega_m(k)\right)
&
\leq
k\, \EE\bigg[\mathbbm{1}_{\{Y_{1,1}^{\chi}>0\}}\sum_{m\in\N:\, m-1<\log_{\mu}(kY_{1,1}^{\chi}/a)} \mu^{m-1} \bigg]
\\&
\leq k\, \EE\bigg[\mathbbm{1}_{\{Y_{1,1}^{\chi}>0\}}\, \frac{\mu^{\log_{\mu}(kY_{1,1}^{\chi}/a)+1}}{\mu-1}\bigg]
\\&
\leq \frac{\mu k^2}{a(\mu-1)}\,\EE[Y_{1,1}^{\chi}] < \infty.
\end{aligned}
\]
 
\medskip\noindent
(3) Note that
\[
D_m = -\EE\left[ Y^{\chi}_{m,1}\mathbbm{1}_{\{Y^{\chi}_{m,1}>aZ_{m-1}\}} \,\middle|\, \mathcal{F}_{m-1} \right].
\]
Hence
\[
\begin{aligned}
|D_m|\, \mathbbm{1}_{\Omega_m(k)}
&
= \EE\left[ Y_{m,1}^{\chi} \, \mathbbm{1}_{\{Y_{m,1}^{\chi}>aZ_{m-1}\}} \,\middle|\, \mathcal{F}_{m-1} \right]\, \mathbbm{1}_{\Omega_m(k)}
\\&
\leq \EE\left[ Y_{m,1}^{\chi}\, \mathbbm{1}_{\{Y_{m,1}^{\chi} > ak^{-1}\mu^{m-1}\}} \,\middle|\, \mathcal{F}_{m-1} \right]  
\\&
= \EE\left[ Y_{1,1}^{\chi}\, \mathbbm{1}_{\{Y_{1,1}^{\chi}>ak^{-1}\mu^{m-1}\}} \right]
\qquad \mathrm{a.s.}
\end{aligned}
\]
Since $\EE[Y_{1,1}^{\chi}]<\infty$, the dominated convergence theorem implies that
\[
\EE\left[ Y_{1,1}^{\chi}\, \mathbbm{1}_{\{Y_{1,1}^{\chi} > ak^{-1}\mu^{m-1}\}} \right] \to 0, \qquad m\to\infty.
\]
Hence, there exists an $m_0(\varepsilon,k)\in\N$ such that, for all $m\geq m_0(\varepsilon,k)$,
\[
|D_m| \, \mathbbm{1}_{\Omega_m(k)} < \tfrac{\varepsilon}{3} \qquad \mathrm{a.s.}
\]
Consequently,
\[
\PP\left( \{|D_m|>\tfrac{\varepsilon}{3}\}\cap\Omega_m(k) \right) = 0
\]
for all sufficiently large $m$, and therefore
\[
\sum_{m\in\N} \PP\left( \{|D_m|>\tfrac{\varepsilon}{3}\}\cap\Omega_m(k) \right) < \infty.
\]

\medskip 
Combining the estimates in (1)--(3), we obtain, for every fixed $k\in\N$,
\[
\sum_{m\in\N} \PP\left( \left\{ \left| \frac{Z_m^\chi}{Z_{m-1}} - \mu f^\chi \right|>\varepsilon \right\} \cap \Omega_m(k) \right) <\infty.
\]
For $k,M\in\N$, put
\[
\Omega_{k,M} = \bigcap_{m\geq M} \Omega_m(k).
\]
Since $\Omega_{k,M}\subseteq \Omega_m(k)$ for every $m\geq M$, it follows that
\[
\sum_{m\geq M} \PP\left( \left\{ \left| \frac{Z_m^\chi}{Z_{m-1}}-\mu f^{\chi} \right|>\varepsilon \right\} \cap \Omega_{k,M} \right) < \infty.
\]
Hence, if
\[
E_{\varepsilon}^{\chi} = \left\{\left\{ \left| \frac{Z_m^{\chi}}{Z_{m-1}}-\mu f^{\chi} \right| > \varepsilon \right\}
\,\,\, \mathrm{i.o.}\right\},
\]
then, by the Borel-Cantelli lemma,
\[
\PP\left(E_{\varepsilon}^{\chi} \cap \Omega_{k,M} \right)=0.
\]
By \eqref{eq:KSthm*},
\[
\PP\left( \bigcup_{k\in\N}\bigcup_{M\in\N}\Omega_{k,M} \right) = 1.
\]
Therefore
\[
\begin{aligned}
\PP\left(E_{\varepsilon}^{\chi}\right)
&=
\PP\left(E_{\varepsilon}^{\chi}\cap\left(\bigcup_{k\in\N}\bigcup_{M\in\N}\Omega_{k,M}\right)\right) 
\leq\sum_{k\in\N}\sum_{M\in\N}\PP\left(E_{\varepsilon}^{\chi}\cap \Omega_{k,M}\right)=0,
\end{aligned}
\]
and so
\[
\PP \left(\left\{ \left| \frac{Z_m^{\chi}}{Z_{m-1}}-\mu f^{\chi} \right| > \varepsilon \right\}
\,\,\, \mathrm{i.o.}\right) = 0.
\]
Since $\varepsilon>0$ was arbitrary, the claim in Lemma~\ref{lem:randomindexSLLN} follows.
\end{proof}

\medskip\noindent
{\bf 4.}
By \eqref{eq:KSthm*},
\[
\frac{Z_m}{Z_{m-1}} \stackrel{\PP-\mathrm{a.s.}}{\longrightarrow} \mu, \qquad m \to \infty.
\]
Combining this with Lemma~\ref{lem:randomindexSLLN}, we get
\begin{align}
\label{eq:convergence-generation}
\frac{Z_m^\chi}{Z_m}
&= \frac{Z_m^\chi}{Z_{m-1}} \, 
\frac{Z_{m-1}}{Z_m} \stackrel{\PP-\mathrm{a.s.}}{\longrightarrow} 
\frac{\mu f^\chi}{\mu} = f^{\chi}, \qquad m\to \infty.
\end{align}
The result in \eqref{eq:convergence-generation} says that the fraction of vertices of type $\chi$ in the \emph{single generation} $m$ converges almost surely to $f^\chi$ as $m\to\infty$. It remains to show that the same holds for the fraction of vertices of type $\chi$ in the \emph{entire tree}. 

\medskip\noindent
{\bf 5.}
First, we show that \eqref{eq:KSthm*} implies
\begin{equation}
\label{eq:KSthm}
\frac{N_m}{\mu^m} \stackrel{\PP-\mathrm{a.s.}}{\longrightarrow} \,\frac{\mu}{\mu-1}\, W_\infty.
\end{equation}
Indeed, pick $M_m$ such that $\log_\mu m \ll M_m \ll m$, and split
\[
N_m = \sum_{j=0}^{m-M_m} Z_j + \sum_{j=m-M_m+1}^m Z_j.
\]
Estimate
\[
\sum_{j=0}^{m-M_m} Z_j \leq (m-M_m+1) Z_{m-M_m},
\]
to get
\begin{align}
\label{new-1}
\frac{1}{\mu^m} \sum_{j=0}^{m-M_m} Z_j \leq \frac{m-M_m+1}{\mu^{M_m}}\,\frac{Z_{m-M_m}}{\mu^{m-M_m}}
\,\stackrel{\PP-\mathrm{a.s.}}{\longrightarrow}\, 0 \times W_\infty = 0.   
\end{align}
Moreover, by the change of variables $k=m-j$, we have
\[
\frac{1}{\mu^m}\sum_{j=m-M_m+1}^m Z_j
=
\sum_{k=0}^{M_m-1}\mu^{-k}\frac{Z_{m-k}}{\mu^{m-k}}.
\]
By \eqref{eq:KSthm*}, there exists an event $\Omega_0\subseteq \Omega$ with
$\PP(\Omega_0)=1$ such that, for every $\omega\in\Omega_0$,
\[
\frac{Z_n(\omega)}{\mu^n}\to W_\infty(\omega), \qquad n\to\infty.
\]
Fix $\omega\in\Omega_0$. Then the sequence $(Z_n(\omega)/\mu^n)_{n\geq 0}$ is bounded, and so there exists a $C(\omega)<\infty$ such that
\[
\frac{Z_n(\omega)}{\mu^n}\leq C(\omega), \qquad n\in \N_0.
\]
Since $M_m\ll m$, we have $M_m-1\leq m$ for all sufficiently large $m$. Therefore, for every sufficiently large $m$ and every integer $k$ such that $0\leq k\leq M_m-1$,
\[
\mu^{-k}\frac{Z_{m-k}(\omega)}{\mu^{m-k}} \leq C(\omega)\,\mu^{-k}.
\]
Since $M_m\to\infty$ and $\sum_{k\in\N_0}C(\omega)\,\mu^{-k}<\infty$, and since for every fixed $k\in\N_0$,
\[
\frac{Z_{m-k}(\omega)}{\mu^{m-k}}\to W_\infty(\omega),\qquad m\to\infty,
\]
the dominated convergence theorem yields
\[
\sum_{k=0}^{M_m-1}\mu^{-k}\frac{Z_{m-k}(\omega)}{\mu^{m-k}}
\to
W_{\infty}(\omega)\sum_{k\in\N_0} \left(\frac{1}{\mu}\right)^k = \frac{\mu}{\mu-1}\, W_\infty(\omega),\qquad m\to\infty.
\]
Thus
\begin{align}
\label{new-2}
\frac{1}{\mu^m} \sum_{j=m-M_m+1}^m Z_j 
\stackrel{\PP-\mathrm{a.s.}}{\longrightarrow} 
  \frac{\mu}{\mu-1}\, W_\infty.
\end{align}
Combining \eqref{new-1} and \eqref{new-2}, we get \eqref{eq:KSthm}. 

\medskip\noindent
{\bf 6.}
Next, we write
\[
N_m^\chi = \sum_{j=0}^m Z_j^{\chi}
= \sum_{j=0}^m \frac{Z_j^{\chi}}{Z_j}\,Z_j,
\]
and split
\[
N_m^\chi 
= R_{m-M_m}\,N_{m-M_m} + \sum_{j=m-M_m+1}^m\frac{Z_j^{\chi}}{Z_j}\,Z_j,
\]
where $\PP(R_{m-M_m} \in [0,1]) = 1$ for all $m$. It follows from \eqref{eq:convergence-generation} that
\[
N_m^\chi \stackrel{\PP-\mathrm{a.s.}}{=} R_{m-M_m}\,N_{m-M_m} + f^\chi(1+o(1))\,[N_m - N_{m-M_m}], \qquad m \to \infty.
\]
Moreover, by \eqref{eq:KSthm},
\[
\frac{N_{m-M_m}}{N_m} \stackrel{\PP-\mathrm{a.s.}}{\longrightarrow} 0.
\]
Hence
\[
\frac{N_m^\chi}{N_m} \stackrel{\PP-\mathrm{a.s.}}{\longrightarrow} f^\chi,
\]
which concludes the proof of Theorem \ref{thm:densities-on-infinite-trees}.
\end{proof}


\subsection{Not all infinite Galton-Watson trees are significant}
\label{ss:Ex}

In this section we show that the infinite Galton-Watson tree may be $\PP\text{-}\mathrm{a.s.}$ significant or $\PP\text{-}\mathrm{a.s.}$ insignificant, depending on its offspring distribution.

\begin{proof}[Proof of the statements in Example~\ref{ex:GW-non-significant}]
Consider a Galton-Watson tree with an offspring distribution $p$. Let $X_v$ denote the size of the offspring of vertex $v$.

\medskip\noindent 
(a) Let $p_1 = q \in (0,1)$, $p_a = 1-q$ for some $a \in \N\setminus\{1\}$, and $p_k = 0$ for all $k \notin \{1,a\}$. Set $\mu = q + a(1-q)$ for the mean offspring. Note that a vertex with minimal degree $2$ cannot be negative, a vertex with maximal degree $a+1$ cannot be positive, while the probability distribution of the number of offspring of the parent vertex of $v$ is size-biased. Then
\[
\begin{aligned}
\PP(v \text{ is positive} \mid X_v = 1) &= 1-q\,\frac{q}{\mu},\\
\PP(v \text{ is neutral} \mid X_v = 1) &= q\,\frac{q}{\mu},\\
\PP(v \text{ is negative} \mid X_v = 1) &= 0,\\
\end{aligned}
\]   
and 
\[
\begin{aligned}
\PP(v \text{ is positive} \mid X_v = a) &= 0,\\
\PP(v \text{ is neutral} \mid X_v = a) &= (1-q)^a\,\frac{a(1-q)}{\mu},\\
\PP(v \text{ is negative} \mid X_v = a) &= 1-(1-q)^a\,\frac{a(1-q)}{\mu}.\\
\end{aligned}
\]   
It follows (via the full probability formula) that
\[
\begin{aligned}
\PP(v \text{ is positive}) &= q - \frac{1}{\mu}\,q^3,\\
\PP(v \text{ is neutral}) &= \frac{1}{\mu}[q^3 + a (1-q)^{a+2}],\\
\PP(v \text{ is negative}) &= (1-q) - \frac{1}{\mu}\,a\,(1-q)^{a+2}.\\
\end{aligned}
\]   
In the limit as $a\to\infty$,
\[
\begin{aligned}
f^+ =\PP(v \text{ is positive}) &\to q,\\
f^0 = \PP(v \text{ is neutral}) &\to 0,\\
f^- = \PP(v \text{ is negative}) &\to 1-q.\\
\end{aligned}
\]   
Hence, from Definition \ref{def:infinite-significance} and Theorem \ref{thm:densities-on-infinite-trees}, for $q\geq\tfrac12$ and $a$ large enough the friendship paradox for the tree is $\PP\text{-}\mathrm{a.s.}$ significant, while for $q<\tfrac12$ and $a$ large enough the friendship paradox for the tree is $\PP\text{-}\mathrm{a.s.}$ insignificant. It is $\PP\text{-}\mathrm{a.s.}$ strictly significant for $q>\tfrac12$ and $a$ large enough.

\medskip\noindent 
(b) Let $p_1 = q \in (0,1)$, $p_{2} =\hat{q} \in (0,1)$, $p_a = 1-q-\hat{q}\in (0,1)$ for some $a \in \N\setminus\{1,\ldots, 4\}$, and $p_k = 0$ for all $k \notin \{1,2,a\}$. Set $\mu = q +2\hat{q} + a(1-q-\hat{q})$ for the mean offspring. Note that a vertex of degree $3$ can be negative if and only if the number of offspring of its three neighbours are either all $1$, or two are $1$ and one is $2$, or one is $1$ and two are $2$. Then, similar to the proof of (a), we have
\[
\begin{aligned}
\PP(v \text{ is positive} \mid X_v = 1) &= 1-q\,\frac{q}{\mu},\\
\PP(v \text{ is neutral} \mid X_v = 1) &= q\,\frac{q}{\mu},\\
\PP(v \text{ is negative} \mid X_v = 1) &= 0,
\end{aligned}
\]   
and 
\[
\begin{aligned}
\PP(v \text{ is positive} \mid X_v = 2) &= 1-\hat{q}^{2}\,\frac{2\hat{q}}{\mu}-3q^{2}\,\frac{q}{\mu}-6q\hat{q}\,\frac{q}{\mu}-\hat{q}^{2}\,\frac{q}{\mu},\\
\PP(v \text{ is neutral} \mid X_v = 2) &= \hat{q}^{2}\,\frac{2\hat{q}}{\mu},\\ 
\PP(v \text{ is negative} \mid X_v = 2) &= q^{2}\,\frac{q}{\mu}+2q\hat{q}\,\frac{q}{\mu}+\hat{q}^{2}\,\frac{q}{\mu}+q^{2}\,\frac{2q}{\mu}+2q\hat{q}\,\frac{2q}{\mu}\\
&=3q^{2}\,\frac{q}{\mu}+6q\hat{q}\,\frac{q}{\mu}+\hat{q}^{2}\,\frac{q}{\mu}.\\
\end{aligned}
\]   
 In addition,
\[
\begin{aligned}
\PP(v \text{ is positive} \mid X_v = a) &= 0,\\
\PP(v \text{ is neutral} \mid X_v = a) &= (1-q-\hat{q})^a\,\frac{a(1-q-\hat{q})}{\mu},\\  
\PP(v \text{ is negative} \mid X_v = a) &= 1-(1-q-\hat{q})^a\,\frac{a(1-q-\hat{q})}{\mu}.\\
\end{aligned}
\]   
It follows that
\[
\begin{aligned}
\PP(v \text{ is positive}) &= q+\hat{q} - \frac{1}{\mu}\big[q^3+2\hat{q}^4+3q^3\hat{q}+6q^2 \hat{q}^2+q\hat{q}^3\big],\\
\PP(v \text{ is neutral}) &= \frac{1}{\mu}\big[q^3 +2\hat{q}^4+ a (1-q-\hat{q})^{a+2}\big],\\  
\PP(v \text{ is negative}) &= (1-q-\hat{q}) + \frac{1}{\mu}\big[3q^3\hat{q}+6q^2\hat{q}^2+q\hat{q}^3-a\,(1-q-\hat{q})^{a+2}\big].
\end{aligned}
\]   
In the limit as $a\to\infty$,
\[
\begin{aligned}
f^+ =\PP(v \text{ is positive}) &\to q+\hat{q},\\
f^0 = \PP(v \text{ is neutral}) &\to 0,\\  
f^- = \PP(v \text{ is negative}) &\to 1-q-\hat{q}.
\end{aligned}
\]   
Therefore, from Definition \ref{def:infinite-significance} and Theorem \ref{thm:densities-on-infinite-trees}, for $q+\hat{q}\geq\tfrac12$ and $a$ large enough the friendship paradox for the tree is $\PP\text{-}\mathrm{a.s.}$ significant, while for $q+\hat{q}<\tfrac12$ and $a$ large enough the friendship paradox for the tree is $\PP\text{-}\mathrm{a.s.}$ insignificant. It is $\PP\text{-}\mathrm{a.s.}$  strictly significant for $q+\hat{q}>\tfrac12$ and $a$ large enough.
\end{proof}


\subsection{Correlations of vertex types in infinite Galton-Watson trees}
\label{sec:correlations-proof}

In this section we prove Theorems~\ref{thm:correlations-on-infinite-trees}--\ref{thm:plusplus-correlations-on-infinite-trees}. We use the same construction as in Section~\ref{ss:infGW}., and the proof goes along the same lines. 

\begin{proof}[Proof of Theorem \ref{thm:correlations-on-infinite-trees}]
We show that, for $\tchi ,\chi \in S$,
\begin{equation}
\label{eq:limit-tree*}
\frac{N^{\tchi\chi}_m}{N_m} \stackrel{\PP-\mathrm{a.s.}}{\longrightarrow} f^{\tchi\chi}, \qquad m\to\infty,
\end{equation}
with $f^{\tchi\chi}$ in Theorem \ref{thm:correlations-on-infinite-trees}(b). The proof follows the same line of thought as in the proof of Theorem~\ref{thm:densities-on-infinite-trees}. 

\medskip\noindent
{\bf 1.}
Fix $\tchi,\chi \in S$. For $v\in V(\Tt_\infty)$, let $Y_v^{\tchi\chi}$ be the number of $\tchi\chi$-edges $(vi,vij)\in E(\Tt_\infty)$ such that $vi$ has type $\tchi$ and its offspring has type $\chi$. We next derive an expression for $\EE\big[Y_v^{\tchi\chi}\big]$. Note that now the calculations span four generations instead of three, because the type of $vi$ depends on the degree of its parent $v$ and of its offspring $vij$. In turn, the type of vertex $vij$ depends on the degree of $vi$ and the degrees of the children of $vij$. We show that 
\begin{equation}\label{eq:tchichi}
\EE[Y_v^{\tchi\chi}]= \mu^2 f^{\tchi \chi}.
\end{equation}

\medskip\noindent
{\bf 2.}
By definition,
\begin{align*}
\EE[Y_v^{\tchi\chi}]
&= \EE\Big[\sum_{i=1}^{X_v} \ind{{\rm sign}\left[X_v+X_{vi1}+\cdots+X_{viX_{vi}}-X_{vi}(X_{vi}+1)\right]=\tchi}\\
&\quad\times\sum_{j=1}^{X_{vi}}\ind{{\rm sign}\left[X_{vi}+X_{vij1}+\cdots+X_{vijX_{vij}}-X_{vij}(X_{vij}+1)\right]=\chi}\Big]\\
&= \sum_{\tl\in\N_0} p_{\tl} \sum_{i=1}^{\tl} \EE\Big[\ind{{\rm sign}\left[\tl+X_{vi1}+\cdots+X_{viX_{vi}}-X_{vi}
(X_{vi}+1)\right]=\tchi} \\
&\quad\times\sum_{j=1}^{X_{vi}} \ind{{\rm sign}\left[X_{vi}+X_{vij1}+\cdots+X_{vijX_{vij}}-X_{vij}(X_{vij}+1)\right]=\chi}\Big],
\end{align*}
\noindent
where the first expression selects those children $i$ of $v$ for which a certain statistic has sign $\tchi$, and counts how many of their children $j$ satisfy a similar sign condition with respect to $\chi$, and in the second expression we condition on the root degree $X_v = \tl$. Since the $X_{vi}$ are i.i.d., we can choose a representative child, say $X_{v1}$, and multiply by $\tl$:
\begin{align*}
\EE[Y_v^{\tchi\chi}]&= \sum_{\tl\in\N_0} p_{\tl}\,\tl\,\EE\Big[\ind{{\rm sign}\left[\tl+X_{v11}+\cdots+X_{v1X_{v1}}-X_{v1}(X_{v1}+1)\right]=\tchi} \\
&\quad\times\sum_{j=1}^{X_{v1}} \ind{{\rm sign}\left[X_{v1}+X_{v1j1}
+\cdots+X_{v1jX_{v1j}}-X_{v1j}(X_{v1j}+1)\right]
=\chi}\Big].
\end{align*}

\medskip\noindent
{\bf 3.}
We next recall the size-biased distribution $\tilde{p}_{\tl} = \tl p_{\tl} / \mu$, and condition on the degree $X_{v1}=\tk$, to get
\begin{align*}
\EE[Y_v^{\tchi\chi}]
&= \mu \sum_{\tl\in\N_0} \tilde{p}_{\tl} \sum_{\tk\in\N_0} p_{\tk}\,\EE\Big[\ind{{\rm sign}\left[\tl+X_{v11}
+\cdots+X_{v1\tk}-\tk(\tk+1)\right]=\tchi} \\
&\quad\times\sum_{j=1}^{\tk}\ind{{\rm sign}\left[\tk+X_{v1j1}+\cdots+X_{v1jX_{v1j}}-X_{v1j}(X_{v1j}+1)\right]=\chi}\Big]\\
&= \mu \sum_{\tl\in\N_0} \tilde{p}_{\tl} \sum_{\tk\in\N_0} p_{\tk}\,\tk\,
\EE\Big[\ind{{\rm sign}\left[\tl+X_{v11}+\cdots+X_{v1\tk}-\tk(\tk+1)\right]=\tchi} \\
&\quad\times\ind{{\rm sign}\left[\tk+X_{v111}+\cdots+X_{v11X_{v11}}-X_{v11}(X_{v11}+1)\right]=\chi}\Big].
\end{align*}
In the last line we exploit exchangeability and fix a representative grandchild. We condition further on the value of $X_{v11} = k$ to separate the expectations and get that
\begin{align*}
\EE[Y_v^{\tchi\chi}]
& =\mu^2 \sum_{\tl\in\N_0} \tilde{p}_{\tl} \sum_{\tk\in\N_0} \tilde{p}_{\tk} \sum_{k\in\N_0} p_k
\EE\Big[\ind{{\rm sign}\left[\tl+k+X_{v12}+\cdots+X_{v1\tk}-\tk(\tk+1)\right]=\tchi} \\
&\quad\times\ind{{\rm sign}\left[\tk+X_{v111}+\cdots+X_{v11k}-k(k+1)\right]=\chi}\Big]\\
&= \mu^2 \sum_{\tk\in\N_0} \tp_{\tk} \sum_{k\in\N_0} p_k\,
\EE\Big[\ind{{\rm sign}\left[\tX+k+X_{v12}+\cdots+X_{v1\tk}-\tk(\tk+1)\right]=\tchi} \\
&\quad\times\ind{{\rm sign}\left[\tk+X_{v111}+\cdots+X_{v11k}-k(k+1)\right]=\chi}\Big],
\end{align*}
where we recall that $v111,\ldots,v11k$ denote the children of $v11$ under the Ulam-Harris labelling introduced in Section~\ref{sec:AP-not}. Above we identified the sum over $\tl$ as a random variable $\tX$ with law $\tilde{p}_{\tl}$, and used the law of total probability. Finally, using independence, we get
\begin{align*}
\EE[Y_v^{\tchi\chi}]
&= \mu^2 \sum_{\tk\in\N_0} \tp_{\tk} \sum_{k\in\N_0} p_k\,\PP\Big({\rm sign}\big[\tX+k+S_{\tk-1}-\tk(\tk+1)\big]=\tchi\Big) \\
&\quad\times\PP\Big( {\rm sign}\big[\tk+S_k-k(k+1)\big]=\chi\Big)=\mu^2 f^{\tchi\chi},
\end{align*}
which proves \eqref{eq:tchichi}.

\medskip\noindent
{\bf 4.}
The rest of the proof carries over almost verbatim as in the proof of Theorem~\ref{thm:densities-on-infinite-trees}, except that we need to consider two generations. Put
\[
Z_m^{\tchi\chi} = \sum_{v \in \partial \Tt^{m-2}_\infty} Y_v^{\tchi\chi}, 
\]
which counts the number of $\tchi\chi$-edges between generations $m-1$ and $m$. Since $Z_{m-2} \stackrel{\PP-\mathrm{a.s.}}{\longrightarrow} \infty$ as $m\to\infty$, by the similar argument as in the proof of Lemma~\ref{lem:randomindexSLLN} in the proof of Theorem~\ref{ss:infGW}, we get
\[
\frac{Z_m^{\tchi\chi}}{Z_{m-2}} \stackrel{\PP-\mathrm{a.s.}}{\longrightarrow} 
\mu^2 f^{\tchi\chi}, \qquad m\to \infty.
\]
Since, by \eqref{eq:KSthm*},
\[
\frac{Z_m}{Z_{m-2}} \stackrel{\PP-\mathrm{a.s.}}{\longrightarrow} \mu^2, \qquad m \to \infty,
\]
we get
\begin{align}
\label{eq:convergence-generation*}
\frac{Z_m^{\tchi\chi}}{Z_m} \stackrel{\PP-\mathrm{a.s.}}{\longrightarrow} f^{\tchi\chi}, \qquad m\to \infty.
\end{align}
The result in \eqref{eq:convergence-generation*} says that the fraction of $\tchi\chi$-edges between generations $m-1$ and $m$ converges almost surely to $f^{\tchi\chi}$ as $m\to\infty$. 
The same holds for the fraction of $\tchi\chi$-edges in the entire tree, by the same argument as we used for the density of vertices of type $\chi$. 
\end{proof}

To prove Theorem \ref{thm:plusplus-correlations-on-infinite-trees}, we need two auxiliary facts stated in the following lemma.
\begin{lemma} 
\label{l:aux}
Let $S_k$ and $\tilde{X}$ be as defined in Theorem \ref{thm:densities-on-infinite-trees}, and let $X$ be a random variable drawn independently from $p$. Subject to \eqref{eq:mono} the following hold:
\begin{itemize}
\item[(a)] 
For each $\tk$ in the support of $\tilde{p}$, the function 
\begin{equation}
\label{eq:mono>ap}
k \mapsto \PP\big(\tk + S_X - X(X+1) > 0 \mid X > k\big) \text{ is non-increasing on the support of } p.
\end{equation}
In particular, the function 
\begin{equation}
\label{eq:mono>}
k \mapsto \PP\big(\tX + S_X - X(X+1) > 0 \mid X > k\big) \text{ is non-increasing on the support of } p.
\end{equation}
\item[(b)] 
For any non-negative discrete random variable $Y$ that is independent of everything else, 
\begin{equation}
\label{eq:mono-st}
\PP(Y > k) \geq \PP\big(Y > k \mid \tX + S_Y - Y(Y+1) > 0\big),  \qquad k \in \N_0.
\end{equation}
\end{itemize}
\end{lemma}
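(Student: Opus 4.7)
The plan is to derive both parts from a single stochastic-monotonicity principle applied to the conditional distributions $X\mid\{X>k\}$: if $\varphi$ is non-increasing on a set containing the supports of integer-valued random variables $Z_1,Z_2$ and $Z_1$ is stochastically smaller than $Z_2$, then $\EE[\varphi(Z_1)]\geq\EE[\varphi(Z_2)]$. Here the role of $\varphi$ will be played by $g_{\tk}(j):=\PP(\tk+S_j-j(j+1)>0)$ for part (a), and by its $\tp$-average $h(j):=\PP(\tX+S_j-j(j+1)>0)=\sum_{\tk}\tp_{\tk}\,g_{\tk}(j)$ for part (b); hypothesis \eqref{eq:mono} makes both non-increasing on $\mathrm{supp}(p)$. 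A direct check of the conditional tail function shows that $X\mid\{X>k_1\}$ is stochastically smaller than $X\mid\{X>k_2\}$ whenever $k_1<k_2$, and the analogous ordering holds for any non-negative discrete random variable $Y$.

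For part (a), conditioning on the value of $X$ and using independence of $X$, the family $(S_j)_{j\in\N}$, and $\tX$ gives
\[
\PP\big(\tk+S_X-X(X+1)>0\,\big|\,X>k\big)=\EE\big[g_{\tk}(X)\,\big|\,X>k\big],
\]
which is non-increasing in $k$ on $\mathrm{supp}(p)$ by the principle above, proving \eqref{eq:mono>ap}. Averaging this inequality against the independent size-biased law $\tp$ of $\tX$ then yields \eqref{eq:mono>}.

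For part (b), Bayes' rule converts the claim into the equivalent statement
\[
\PP\big(\tX+S_Y-Y(Y+1)>0\big)\geq\PP\big(\tX+S_Y-Y(Y+1)>0\,\big|\,Y>k\big),
\]
i.e.\ $\EE[h(Y)]\geq\EE[h(Y)\mid Y>k]$, which is immediate from the same monotonicity principle combined with the stochastic ordering $Y$ versus $Y\mid\{Y>k\}$.

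The only delicate point I anticipate is making sure that the monotonicity of $g_{\tk}$ (and hence of $h$) is applied along the supports of the conditional random variables that actually appear. In part (a) this is automatic, since both $X$ and $X\mid\{X>k\}$ are supported in $\mathrm{supp}(p)$; in part (b) it requires $Y$ to take values in $\mathrm{supp}(p)$, which will be the case in the applications of the lemma to the proof of Theorem \ref{thm:plusplus-correlations-on-infinite-trees}.
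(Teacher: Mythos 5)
Your proof is correct, and for part (a) it is essentially the paper's argument in different packaging: the paper compares the conditional tail-averages at $k$ and $k+1$ by writing the former as a convex combination of the value $\PP(\tk+S_{k+1}-(k+1)(k+2)>0)$ and the latter, which is exactly a hands-on verification of the stochastic ordering $X\mid\{X>k\}\preceq X\mid\{X>k+1\}$ that you invoke abstractly; the passage from \eqref{eq:mono>ap} to \eqref{eq:mono>} by averaging over $\tilde p$ is the same in both. For part (b) your route is genuinely cleaner in organization: the paper works directly with the ratio defining $\PP\big(Y>k\mid A\big)$, where $A=\{\tX+S_Y-Y(Y+1)>0\}$, and pushes it through a three-step chain of inequalities on numerator and denominator, whereas your Bayes flip reduces \eqref{eq:mono-st} to $\PP(A\mid Y>k)\le\PP(A)$, i.e.\ to the same ``non-increasing function evaluated at a stochastically larger variable'' principle as in (a), so that both parts become instances of one lemma. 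What the paper's computation buys is that it never needs to name or prove the stochastic-dominance principle; what yours buys is unification and a clearer view of where \eqref{eq:mono} enters. Your closing caveat about supports is apt and in fact slightly more careful than the paper, whose proof of (b) also tacitly uses monotonicity of $k\mapsto\PP(\tX+S_k-k(k+1)>0)$ at points of the support of $Y$; this is harmless because the lemma is applied with $Y=\tX$, which is supported in $\mathrm{supp}(p)$ (recall $p_0=0$).
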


\begin{proof}
(a) Abbreviate
\[
\rho(k)=\PP\big(\tk + S_k - k(k+1)>0\big).
\]
The condition in \eqref{eq:mono} says that, for each $\tk$ in the support of $\tilde{p}$, $k \mapsto \rho(k)$ is non-increasing on the support of $p$. Write
\begin{align*}
&\PP\big(\tk + S_X - X(X+1)>0\mid X>k\big) = \frac{1}{\sum_{l>k} p_l} \sum_{s>k} p_s\,\rho(s)\\
&\qquad = \left[\frac{p_{k+1}}{\sum_{l > k} p_l}\right]\,\rho(k+1)
+ \left[\frac{\sum_{l > k+1} p_l}{\sum_{l > k} p_l}\right] 
\,\frac{1}{\sum_{l > k+1} p_l}\,\sum_{s > k+1} p_s\,\rho(s)\\
&\qquad = \left[\frac{p_{k+1}}{\sum_{l > k} p_l}\right]\,\rho(k+1)
+ \left[1-\frac{p_{k+1}}{\sum_{l > k} p_l}\right] \,
\frac{1}{\sum_{l > k+1} p_l}\,\sum_{s > k+1} p_s\,\rho(s)\\
&\qquad \geq \frac{1}{\sum_{l > k+1} p_l}\,\sum_{s > k+1} p_s\,\rho(s) 
&\quad \tag{\scriptsize by \eqref{eq:mono}}\\
&\qquad = \PP\big(\tk + S_X - X(X+1)>0\mid X>k+1\big).
\end{align*}
(b) Abbreviate
\[
\rho(k)=\PP\big(\tX + S_k - k(k+1)>0\big).
\]
The condition in \eqref{eq:mono} says that $k \mapsto \rho(k)$ is non-increasing on the support of $p$. Write
\begin{align*}
&\PP\big(Y > k \mid \tX + S_Y - Y(Y+1) > 0\big)\\ 
&\qquad = \frac{\sum_{s>k} \PP(Y=s)\,\rho(s)}{\sum_{l \leq k} \PP(Y=l)\,\rho(l) 
+ \sum_{l> k} \PP(Y=l)\,\rho(l)}\\
&\qquad \leq \frac{\sum_{s>k} \PP(Y=s)\,\rho(s)}{\rho(k)\,\PP(Y\leq k)
+\sum_{l> k }\PP(Y=l)\,\rho(l)} \quad \tag{\scriptsize by \eqref{eq:mono}}\\
&\qquad \leq \frac{\rho(k+1)\,\PP(Y>k)}{\rho(k)\,\PP(Y\leq k)
+ \rho(k+1)\,\PP(Y>k)} \quad \tag{\scriptsize $x \mapsto \frac{x}{a+x}$ non-decreasing}\\
&\qquad \leq \frac{\rho(k+1)\,\PP(Y>k)}{\rho(k+1)\,\PP(Y \leq k) + \rho(k+1)\,\PP(Y>k)}
\quad \tag{\scriptsize by \eqref{eq:mono}}\\
&\qquad = \PP(Y>k).
\end{align*}
\end{proof}

\noindent
The intuition behind Lemma~\ref{l:aux} is as follows. Similarly as in \eqref{eq:mono}, the statement in \eqref{eq:mono>} says that bounding $X$ from below makes it harder for a vertex with a number of offspring $X$ to be positive.  The statement in \eqref{eq:mono-st} says that conditioning on the event that the vertex with the number of offspring $Y$ is positive makes $Y$ stochastically smaller. Note that \eqref{eq:mono-st} holds for any distribution $Y$. The offspring distribution $p$ participates through $S_Y = \sum_{i=1}^Y X_i$, just as in \eqref{eq:mono}. 

\begin{proof}[Proof of Theorem \ref{thm:plusplus-correlations-on-infinite-trees}]
From \eqref{AP-fxx}, using the notations introduced therein, we can write
\begin{align*}
f^{++} &= \PP\big(\tX^* + S_{\tX-1} + X - \tX(\tX+1) > 0,\,\tX + S_X^* -X(X+1) > 0\big)\\
&= \PP\big(\tX^* + S_{\tX-1} + X - \tX(\tX+1)>0 \big) \quad \tag{\scriptsize parent type +}\\
&\quad \times\,\PP\big(\tX + S_X^* - X(X+1) > 0 \mid \tX^* + S_{\tX-1} + X - \tX(\tX+1) > 0\big)
\quad \tag{\scriptsize child type + given parent type +}\\
&= \tf^+\,\PP\big(\tX + S_X^* - X(X+1) > 0 \mid \tX^* + S_{\tX-1} + X - \tX(\tX+1) > 0\big).
\end{align*}
It remains to show that the last conditional probability is $\leq f^{+}$. This comes in two steps. 

\medskip\noindent
{\bf 1.}
Estimate
\begin{align*}
&\PP\big(\tX + S_X^* - X(X+1) > 0 \mid \tX^* + S_{\tX -1}+X - \tX(\tX+1) > 0\big)\\
&\qquad = \sum_{\tk\in\N} \sum_{\tk^*\in\N} \sum_{s\in\N_0} 
\PP\big (\tX^*=\tk^*, \tX=\tk, S_{\tk-1}=s \mid \tX^* + S_{\tX} - \tX(\tX+1) > 0\big)\\
&\qquad \qquad \times \PP\big(\tk + S_X^* - X(X+1) > 0 \mid X  > \tk(\tk+1) - s - \tk^*\big)
\quad \tag{\scriptsize $S_{\tX-1}+X\stackrel{d}{=}S_{\tX}$}\\
&\qquad \leq \sum_{\tk\in\N} \sum_{\tk^*\in\N} \sum_{s\in\N_0} 
\PP\big (\tX^*=\tk^*, \tX=\tk, S_{\tk-1}=s \mid \tX^* + S_{\tX} - \tX(\tX+1) > 0\big)\\
&\qquad \qquad\times\PP\big(\tk + S_X^* -X(X+1) > 0)
\quad \tag{\scriptsize by \eqref{eq:mono>ap}}\\
&\qquad = \sum_{\tk\in\N_0} \PP\big (\tX=\tk \mid \tX^* + S_{\tX} - \tX(\tX+1) > 0\big)
\,\PP\big(\tk + S_X- X(X+1) > 0)
\quad \tag{\scriptsize sum over $\tk^*$ and $s$}\\
&\qquad = \PP\big(\tX^+ +S_X - X(X+1) > 0),
\end{align*}
where $\tX^+$ is the random variable with distribution $\PP(\tX^+=\tk) = \PP(\tX=\tk \mid \tX^* + S_{\tX} -\tX(\tX+1) > 0)$, $k\in\N_0$. 

\medskip\noindent
{\bf 2.} 
By \eqref{eq:mono-st} with $Y=\tX$, we have that $\tX$ is stochastically larger than $\tX^+$. Hence, from \eqref{eq:mono},
\[
\PP\big(\tX^+ + S_X - X(X+1) > 0) \leq \PP\big(\tX + S_X - X(X+1) > 0) = f^+.
\] 
\end{proof}

Intuitively, the negative correlation $f^{++} \leq \tf^+f^+$ holds for two reasons: (1) a child of a positive parent tends to have a large number of offspring, so that it is less likely to be positive; (2) a positive parent has a degree that tends to be small, so that its children are less likely to be positive. Each of these reasons should be enough to settle the negative correlation. However, our proof relies on both.


\subsection{Negative correlation in Galton-Watson trees}
\label{ap:Ex}

In this section we consider two classes of infinite Galton-Watson trees in which the correlation between the positivity of a parent and the positivity of its child is negative.

\begin{proof}[Proof of the statements in Example~\ref{AP-example}]
Let $S_k$ and $\tilde{X}$ be as defined in Theorem \ref{thm:densities-on-infinite-trees}. 

\medskip\noindent 
(a) Consider a Galton-Watson tree with offspring distribution $p_1\in (0,1)$, $p_a = 1-p_1$ for some $a \in \N\setminus\{1\}$, and $p_k = 0$ for all $k \notin \{1,a\}$. Then
\begin{align*}
f^{++}&=\tilde{p}_1\,p_1\,\PP\big(\tX >1\big)\,\PP\big(S_{1}>1\big)\\
&=\tilde{p}_1\,p_1\,\tilde{p}_a\,p_a <\tilde{p}_1\,p_1\,(\tilde{p}_a+p_a)^{2}.
\end{align*}
Moreover,
\begin{align*}
\tilde{f}^{+}
&=\tilde{p}_1\,\PP\big(\tX +S_{1} >2\big)+\tilde{p}_a\,\PP\big(\tX +S_{a} >a(a+1)\big) =\tilde{p}_1(\tilde{p}_a+p_a ),\\
f^{+}
&=p_1\,\PP\big(\tX +S_{1} >2\big)+p_a\,\PP\big(\tX +S_{a} >a(a+1)\big) =p_1(\tilde{p}_a+p_a ).
\end{align*}
Hence $f^{++}<\tilde{f}^{+}f^{+}$.

\medskip\noindent 
(b) Consider a Galton-Watson tree with offspring distribution $p_1\in (0,\frac{1}{2})$, $p_2\in (0,1-p_1)$, $p_a = 1-p_1-p_2\in (0,1)$ for some $a \in \N\setminus\{1,2,3,4\}$, and $p_k = 0$ for all $k \notin \{1,2,a\}$. Then
\begin{align*}
f^{++}
&=\tilde{p}_1\,p_1\,\PP\big(\tX >1\big)\,\PP\big(S_{1}>1\big)
+\tilde{p}_1\,p_2\,\PP\big(S_{2}>5\big)\\
&\quad +\tilde{p}_2\,p_1\,\PP\big(\tX +S_1 >5\big)
+\tilde{p}_2\,p_2\,\PP\big(\tX +S_1 >4\big)\,\PP\big(S_{2}>4\big) \\
&= \tilde{p}_1\,p_1(1-\tilde{p}_1)(1-p_1)+2\,\tilde{p}_1\,p_2\,p_{a}+\tilde{p}_2\,p_1(\tilde{p}_a+p_a) 
+2\,\tilde{p}_2\,p_2\,p_{a}(\tilde{p}_a+p_a).
\end{align*}
Also
\begin{align*}
\tilde{f}^{+}
&=\tilde{p}_1\,\PP\big(\tX +S_{1} >2\big)+\tilde{p}_2\,\PP\big(\tX +S_{2} >6\big) 
=\tilde{p}_1(2-\tilde{p}_1-p_1)+\tilde{p}_2(\tilde{p}_a+2p_a),\\
f^{+}
&=p_1\,\PP\big(\tX +S_{1} >2\big)+p_2\,\PP\big(\tX +S_{2} >6\big) 
=p_1(2-\tilde{p}_1-p_1)+p_2(\tilde{p}_a+2p_a).
\end{align*}
Hence 
\begin{align*}
\tilde{f}^{+}f^{+}&=\tilde{p}_1\,p_1(2-\tilde{p}_1-p_1)^{2}+\tilde{p}_1\,p_2 (2-\tilde{p}_1-p_1)(\tilde{p}_a+2p_a)\\
&\quad + \tilde{p}_2\,p_{1}(2-\tilde{p}_1-p_1)(\tilde{p}_a+2p_a)+\tilde{p}_2\,p_2 (\tilde{p}_a+2p_a)^2\\
&>
\tilde{p}_1\,p_1(1-\tilde{p}_1)(1-p_1)+\tilde{p}_1\,p_2 (2-\tilde{p}_1-p_1)(\tilde{p}_a+2p_a)\\
&\quad + \tilde{p}_2\,p_{1}(2-\tilde{p}_1-p_1)(\tilde{p}_a+2p_a)+2\,\tilde{p}_2\,p_2\,p_a (\tilde{p}_a+p_a).
\end{align*}
Since $\tilde{p}_1\leq p_1<\frac{1}{2}$, we have
\begin{align*}
&(2-\tilde{p}_1-p_1)(\tilde{p}_a+2p_a)> (\tilde{p}_a+2p_a) > 2p_a,\\
&(2-\tilde{p}_1-p_1)(\tilde{p}_a+2p_a)>(\tilde{p}_a+2p_a)>(\tilde{p}_a+p_a).
\end{align*}
Therefore $f^{++}<\tilde{f}^{+}f^{+}$.
\end{proof}



\end{document}